\newtheorem{theorem}{Theorem}[section]
\newtheorem{definition}[theorem]{Definition}
\newtheorem{proposition}[theorem]{Proposition}
\begin{document}

\title{Deformed Fourier models with formal parameters}

\author{Teodor Banica}
\address{Cergy-Pontoise University, 95000 Cergy-Pontoise, France. {\tt teodor.banica@u-cergy.fr}}

\subjclass[2010]{16T05 (60B15)}
\keywords{Quantum permutation, Matrix model}

\begin{abstract}
The deformed Fourier matrices $H=F_M\otimes_QF_N$, with $Q\in\mathbb T^{MN}$, produce a matrix model $C(S_{MN}^+)\to M_{MN}(C(\mathbb T^{MN}))$. When $Q\in\mathbb T^{MN}$ is generic, the corresponding fiber can be investigated via algebraic techniques, and the main character law is asymptotically free Poisson. We present here an alternative point of view on these questions, using formal parameters instead of generic parameters, and analytic tools.
\end{abstract}

\maketitle

\section*{Introduction}

It is well-known that the unitary representations of a discrete group $\Gamma$ are in one-to-one correspondence with the representations of the group algebra $C^*(\Gamma)$. Now given a discrete subgroup $\Gamma\subset U_N$, we obtain a representation $\pi:C^*(\Gamma)\to M_N(\mathbb C)$. This representation is in general not faithul, its target algebra being finite dimensional. On the other hand, this representation ``reminds'' $\Gamma$. We say that $\pi$ is inner faithful.

The inner faithful representations can be in fact axiomatized in the general discrete quantum group context. Given such a quantum group $\Gamma$, and a representation $\pi:C^*(\Gamma)\to B$, one can construct a biggest quotient $\Gamma\to\Lambda$ producing a factorization $\pi:C^*(\Gamma)\to C^*(\Lambda)\to B$, and $\pi$ is called inner faithful when $\Gamma=\Lambda$. See \cite{bb1}.

This construction is of particular interest when formulated from a dual viewpoint, with $\Gamma=\widehat{G}$, and with $B=M_K(C(X))$ being a random matrix algebra. To be more precise, given a compact quantum group $G$, and a matrix model $\pi:C(G)\to M_K(C(X))$, one can construct a biggest closed subgroup $H\subset G$ producing a factorization $\pi:C(G)\to C(H)\to M_K(C(X))$, and $\pi$ is called inner faithful when $G=H$. See \cite{bb1}.

Generally speaking, an inner faithful model $\pi:C(G)\to M_K(C(X))$ can be regarded as being a source of interesting information about $G$, of both algebraic and analytic nature. Thus, we have here a new method for investigating the compact quantum groups. This method is alternative to the pure algebraic geometric point of view (``easiness'').

A number of tools for dealing with the inner faithful models have been developed, some of them being algebraic \cite{bb1}, \cite{bb2}, \cite{bic}, \cite{chi}, and some other, analytic \cite{bfs}, \cite{bcv}, \cite{sso}, \cite{wa2}. However, at the level of concrete examples, only a few models have been succesfully investigated, so far. Among them is the model $C(S_{MN}^+)\to M_{MN}(\mathbb C)$ coming from a deformed Fourier matrix $H=F_M\otimes_QF_N$, with parameter $Q\in\mathbb T^{MN}$.

The story with these latter models is long and twisted, and involved many people. As a brief summary, the development of the subject was as follows:
\begin{enumerate}
\item Given an arbitrary inner faithful model $C(G)\to M_K(\mathbb C)$, an abstract formula for the Haar integration over $G$, based on \cite{fsk}, was found in \cite{bfs}.

\item The representations $C(S_{MN}^+)\to M_{MN}(\mathbb C)$ coming from deformed Fourier matrices with generic parameters were studied in \cite{bb2}, using algebraic techniques.

\item Some applications of the integration formula in \cite{bfs}, to the deformed Fourier matrix representations, were found short afterwards, in \cite{ba2}.

\item In the meantime, the algebraic methods in \cite{bb2} were substantially extended, as to cover certain non-generic parameters $Q\in\mathbb T^{MN}$, in \cite{bic}.

\item In the meantime as well, a generalization of the integration formula in \cite{bfs}, covering the models $C(G)\to M_K(C(X))$, was found in  \cite{wa2}.

\item The integration formula in \cite{wa2} was applied to certain related representations, of type $C(S_{N^2}^+)\to M_{N^2}(C(U_N))$, in the recent paper \cite{bne}.
\end{enumerate}

The purpose of this paper is to study the deformed Fourier models, using analytic techniques. We will take advantage of the recent formula in \cite{wa2}, and investigate the full parametric model $\pi:C(S_{MN}^+)\to M_{MN}(C(\mathbb T^{MN}))$, instead of its individual fibers. The formula in \cite{wa2} will turn to apply well, and to lead to concrete results. As in \cite{bb2}, our main result will state that main character becomes free Poisson, in the $M=tN\to\infty$ limit. We will discuss as well a number of further properties of the main character.

These results can be deduced as well from \cite{bb2}, since in the probabilistic picture for the moments, the non-generic parameters do not count. However, we believe that having a fully analytic proof is a good thing. In short, following \cite{bne}, we have now a second concrete application of the integration formula in \cite{bfs}, \cite{wa2}. Our hope is that this formula can be applied to some other situations, and could eventually become a serious alternative to the Weingarten formula \cite{bco}, \cite{csn}, and to the ``easiness'' methods in general \cite{bsp}, \cite{rwe}.

The paper is organized as follows: 1-2 are preliminary sections, in 3-4 we study the truncated moments of the main character, in 5-6 we compute the plain moments of the main character, in 7-8 we work out a number of moment estimates, and in 9-10 we state and prove our main results, and we end with a few concluding remarks.

\section{Quantum groups}

We use the quantum group formalism of Woronowicz \cite{wo1}, \cite{wo2}, with the extra axiom $S^2=id$. That is, we consider pairs $(A,u)$ consisting of a $C^*$-algebra $A$, and a unitary matrix $u\in M_N(A)$, such that the following formulae define morphisms of $C^*$-algebras:
$$\Delta(u_{ij})=\sum_ku_{ik}\otimes u_{kj}\quad,\quad\varepsilon(u_{ij})=\delta_{ij}\quad,\quad S(u_{ij})=u_{ji}^*$$

These morphisms are called comultiplication, counit and antipode. The abstract spectum $G=Spec(A)$ is called compact quantum group, and we write $A=C(G)$. 

The example that we are interested in, due to Wang \cite{wa1}, is as follows:

\begin{definition}
$C(S_N^+)$ is the universal $C^*$-algebra generated by the entries of a $N\times N$ matrix $u=(u_{ij})$ which is magic, in the sense that its entries are projections ($p=p^*=p^2$), summing up to $1$ on each row and each column of $u$.
\end{definition}

This algebra satisfies Woronowicz's axioms, and the underlying noncommutative space $S_N^+$ is therefore a quantum group, called quantum permutation group. We have an inclusion $S_N\subset S_N^+$, which is an isomorphism at $N=1,2,3$, but not at $N\geq4$. See \cite{wa1}.

Now back to the general case, we have the following key notion, fom \cite{bb1}:

\begin{definition}
Let $\pi:C(G)\to M_K(C(T))$ be a $C^*$-algebra representation. 
\begin{enumerate}
\item The Hopf image of $\pi$ is the smallest quotient Hopf $C^*$-algebra $C(G)\to C(H)$ producing a factorization of type $\pi:C(G)\to C(H)\to M_K(C(T))$.

\item When the inclusion $H\subset G$ is an isomorphism, i.e. when there is no non-trivial factorization as above, we say that $\pi$ is inner faithful.
\end{enumerate}
\end{definition}

As a basic example, when $G=\widehat{\Gamma}$ is a group dual, $\pi$ must come from a group representation $\Gamma\to C(T,U_K)$, and the factorization in (1) is the one obtained by taking the image, $\Gamma\to\Gamma'\subset C(T,U_K)$. Thus $\pi$ is inner faithful when $\Gamma\subset C(T,U_K)$.

Also, given a compact group $G$, and elements $g_1,\ldots,g_K\in G$, we can consider the representation $\pi=\oplus_iev_{g_i}:C(G)\to\mathbb C^K$. The minimal factorization of $\pi$ is then via $C(G')$, with $G'=\overline{<g_1,\ldots,g_K>}$. Thus $\pi$ is inner faithful when $G=\overline{<g_1,\ldots,g_K>}$.

We recall that an Hadamard matrix is a square matrix $H\in M_N(\mathbb C)$ whose entries are on the unit circle, and whose rows are pairwise orthogonal. Given a parametric family of such matrices, $\{H^x|x\in T\}$, we can consider the corresponding element $H\in M_N(C(T))$, that we call as well Hadamard matrix. The relation with $S_N^+$ comes from:

\begin{definition}
Associated to $H\in M_N(C(T))$ Hadamard is the representation
$$\pi:C(S_N^+)\to M_N(C(T))\quad,\quad \pi(u_{ij}):x\to Proj(H_i^x/H_j^x)$$
where $H_1^x,\ldots,H_N^x\in\mathbb T^N$ are the rows of $H^x$, and the quotients are taken inside $\mathbb T^N$.
\end{definition}

Here the fact that the projections on the right form a magic matrix, and hence produce a representation of $C(S_N^+)$, follows from the Hadamard matrix condition.

The problem is that of computing the Hopf image of the above representation. There is only one basic example here, namely the one coming from the Fourier coupling $F_G\in M_{G\times\widehat{G}}(\mathbb C)$ of a finite abelian group $G$. Here the representation constructed above factorizes as $\pi:C(S_G^+)\to C(S_G)\to C(G)\to M_N(\mathbb C)$, and the Hopf image is $C(G)$.

In order to approach the problem, we use tools from \cite{bfs}, \cite{wa2}. Let us first go back to the general context of Definition 1.2, and assume that $T$ is a measured space, so that we have a trace $tr:M_K(C(T))\to\mathbb C$, given by $tr(M)=\frac{1}{K}\sum_{i=1}^K\int_XM_{ii}(x)dx$. 

We have then the following key result, from \cite{bfs}, \cite{wa2}:

\begin{proposition}
Given an inner faithful model $\pi:C(G)\to M_K(C(T))$, we have
$$\int_G=\lim_{k\to\infty}\frac{1}{k}\sum_{r=1}^k(tr\circ\pi)^{*r}$$
in moments, with the convolutions at right being given by $\phi*\psi=(\phi\otimes\psi)\Delta$.
\end{proposition}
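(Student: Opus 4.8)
The plan is to realize the Haar functional on $C(H)$, where $H\subset G$ is the Hopf image of $\pi$, as a Cesàro limit of convolution powers of the canonical trace pulled back through $\pi$. Since $\pi$ is assumed inner faithful, $H=G$, so it suffices to prove the statement for $\pi$ itself without worrying about the factorization. First I would recall the elementary fact about idempotent states: if $\varphi$ is a state on a Woronowicz algebra $C(G)$, then the Cesàro averages $\frac1k\sum_{r=1}^k\varphi^{*r}$ converge (in the weak-$*$ topology on the dense $*$-subalgebra spanned by coefficients of irreducible corepresentations, i.e. ``in moments'') to an idempotent state $\varphi^{*\infty}$, characterized by $\varphi*\varphi^{*\infty}=\varphi^{*\infty}*\varphi=\varphi^{*\infty}$. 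This is a standard compactness-plus-averaging argument: on each finite-dimensional corepresentation $v$, the matrix $(\mathrm{id}\otimes\varphi)v$ has norm $\le1$, so its Cesàro powers converge to the projection onto its fixed-point space, and one assembles these limits into a functional on the Hopf $*$-algebra.

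Next I would invoke the description, due to Woronowicz and used in \cite{bfs}, of idempotent states: every idempotent state on $C(G)$ is the Haar state of a quotient quantum subgroup. Concretely, setting $\psi=(tr\circ\pi)^{*\infty}$, the functional $\psi$ is idempotent, hence $\psi=\int_K$ for some quantum subgroup $K\subset G$. The task is then to identify $K$ with the Hopf image $H$. For this I would argue in two directions. On one hand, $tr\circ\pi$ factors through $C(H)$ by definition of the Hopf image, hence so does each convolution power and hence so does $\psi$; therefore $C(K)$ is a quotient of $C(H)$, i.e. $K\subset H$. On the other hand, I must show $H\subset K$, equivalently that the GNS/Hopf-image construction applied to $tr\circ\pi$ produces nothing smaller than what the idempotent state $\psi$ sees — precisely, that the null space of $\psi$ (inside the Hopf $*$-algebra) is a Hopf ideal whose quotient still admits a factorization of $\pi$. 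The key computation here is that an element $a$ lies in the kernel of the Hopf image quotient iff $(tr\circ\pi)(xay)=0$ for all $x,y$, and one relates this to $\psi(a^*a)=0$ using the idempotency relation $tr\circ\pi=(tr\circ\pi)*\psi$ together with positivity.

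The step I expect to be the main obstacle is exactly this second inclusion $H\subset K$, i.e. showing that the idempotent state $\psi$ ``remembers'' the whole Hopf image and not a proper subgroup. The subtlety is that $\psi$ is built only from the \emph{trace} $tr\circ\pi$, whereas the Hopf image is built from the full $M_K(C(T))$-valued representation $\pi$; a priori the trace could lose information. The resolution, following \cite{fsk}, \cite{bfs}, is a faithfulness argument: because $tr$ is a faithful trace on $M_K(C(T))$ (when $T$ carries a faithful measure) and $\pi$ lands in this algebra, the sesquilinear form $a,b\mapsto (tr\circ\pi)(b^*a)$ has the same null space as the form induced by $\pi$ itself, so the Hopf-image quotient coincides with the GNS quotient of the idempotent state. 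Once both inclusions are in hand, $K=H$, and since $\pi$ is inner faithful, $H=G$, giving $\int_G=\psi=\lim_{k\to\infty}\frac1k\sum_{r=1}^k(tr\circ\pi)^{*r}$ as claimed. I would then remark that convergence is stated ``in moments'' precisely because on the full $C^*$-algebra one only has this weak form of convergence, which however is all that is needed for computing Haar integrals of polynomial expressions in the $u_{ij}$.
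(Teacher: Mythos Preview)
The paper's own proof is not self-contained: it records that the scalar case $T=\{\cdot\}$ is in \cite{bfs}, building on the idempotent-state theory of \cite{fsk}, and that the parametric generalisation is in \cite{wa2}. Your proposal is an attempt to reconstruct that argument, and the overall shape---Ces\`aro limit of convolution powers gives an idempotent state, then identify that state with the Haar state of the Hopf image---is indeed the route taken in those references. The convergence step via the mean ergodic theorem on each irreducible block is correct.

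There is, however, a genuine gap. You write that ``every idempotent state on $C(G)$ is the Haar state of a quotient quantum subgroup'' and then produce a subgroup $K$ from $\psi$ in order to run a two-inclusion argument. That assertion is false, even under the Kac hypothesis $S^2=\mathrm{id}$: Pal's idempotent state on the eight-dimensional Kac--Paljutkin quantum group is the standard counterexample, and a large part of \cite{fsk} is devoted precisely to analysing this failure (idempotent states correspond in general to expected coidalgebras, not to quantum subgroups). So the comparison $K\subset H$, $H\subset K$ cannot begin as stated, because $K$ need not exist.

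The repair is to bypass the nonexistent $K$ and argue directly that $\psi$ is the Haar state on $C(H)$. The ingredients are already in your ``second inclusion'' paragraph: from the absorption relation $\varphi*\psi=\psi$ (with $\varphi=tr\circ\pi$) together with faithfulness of $tr$, one shows that the left kernel $N_\psi=\{a:\psi(a^*a)=0\}$ is a Hopf ideal through which $\pi$ factors; minimality of the Hopf image then forces $N_\psi$ to coincide with the kernel of $C(G)\to C(H)$, so that $\psi$ descends to a faithful idempotent state on $C(H)$, which is then identified with the Haar state there. This reorganisation---proving that \emph{this particular} idempotent state is of Haar type, rather than invoking a false general classification---is essentially how \cite{bfs} proceeds.
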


\begin{proof}
This was proved in \cite{bfs} in the case $X=\{.\}$, using theory from \cite{fsk}, the idea being that the Haar state can be obtained by starting with an arbitrary positive linear functional, and then convolving. The general case was established in \cite{wa2}.
\end{proof}

In the case where $G$ has a fundamental corepresentation $u=(u_{ij})$, the above result has a more concrete formulation, of linear algebra flavor, as follows:

\begin{proposition}
Given an inner faithful model $\pi:C(G)\to M_K(C(T))$, mapping $u_{ij}\to U_{ij}$, the moments of $\chi=\sum_iu_{ii}$ with respect to $\int_G^r=(tr\otimes\pi)^{*r}$ are the numbers
$$c_p^r=Tr(T_p^r)\quad:\quad (T_p)_{i_1\ldots i_p,j_1\ldots j_p}=tr(U_{i_1j_1}\ldots U_{i_pj_p})$$
and these numbers converge with $r\to\infty$ to the moments of $\chi$ with respect to $\int_G$.
\end{proposition}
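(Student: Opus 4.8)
The plan is to combine Proposition 1.4 with the standard linear-algebra identity for iterated convolution of functionals on a Hopf algebra. First I would record what the convolution powers do on the generators: using $\Delta(u_{ij})=\sum_k u_{ik}\otimes u_{kj}$ repeatedly, one gets $\Delta^{(r-1)}(u_{ij})=\sum_{k_1,\dots,k_{r-1}} u_{ik_1}\otimes u_{k_1k_2}\otimes\cdots\otimes u_{k_{r-1}j}$, and more generally $\Delta^{(r-1)}(u_{i_1j_1}\cdots u_{i_pj_p})$ is a sum over chains of indices of tensor products of length-$r$ words in the $u$'s. Consequently, for a functional of the form $\phi=tr\circ\pi$, the value $\phi^{*r}(u_{i_1j_1}\cdots u_{i_pj_p})$ is exactly the sum over intermediate multi-indices $\ell^{(1)},\dots,\ell^{(r-1)}\in\{1,\dots,p\}^{?}$... more cleanly: setting $(T_p)_{(i_1\dots i_p),(j_1\dots j_p)}=tr(U_{i_1j_1}\cdots U_{i_pj_p})$ as in the statement, the definition of matrix multiplication gives $(T_p^r)_{(i_1\dots i_p),(j_1\dots j_p)}=\sum tr(U_{i_1k^{(1)}_1}\cdots)\cdots$, i.e.\ exactly $(tr\circ\pi)^{*r}(u_{i_1j_1}\cdots u_{i_pj_p})$. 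So $(tr\circ\pi)^{*r}$ evaluated on the degree-$p$ words in $u$ is encoded by the $r$-th power of the single matrix $T_p$.

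Next I would take the trace. The $p$-th moment of $\chi=\sum_i u_{ii}$ with respect to $\int_G^r=(tr\circ\pi)^{*r}$ is
$$\int_G^r\chi^p=\sum_{i_1,\dots,i_p}(tr\circ\pi)^{*r}(u_{i_1i_1}\cdots u_{i_pi_p}),$$
and by the previous paragraph this equals $\sum_{i_1,\dots,i_p}(T_p^r)_{(i_1\dots i_p),(i_1\dots i_p)}=\mathrm{Tr}(T_p^r)=c_p^r$. Here I would be slightly careful about one point: $\chi^p$ expands as a sum of products $u_{i_1i_1}\cdots u_{i_pi_p}$ over \emph{all} tuples $(i_1,\dots,i_p)$, with repetitions allowed, and the bijection with diagonal entries of $T_p^r$ still works because $T_p$ is indexed by all of $\{1,\dots,N\}^p$, not just injective tuples. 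This is the one routine verification that has to be done honestly rather than waved through.

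Finally, the convergence statement is immediate from Proposition 1.4: that proposition asserts $\int_G=\lim_{k\to\infty}\frac1k\sum_{r=1}^k(tr\circ\pi)^{*r}$ in moments, hence $\int_G\chi^p=\lim_{k\to\infty}\frac1k\sum_{r=1}^k c_p^r$. A Ces\`aro limit of a bounded sequence need not coincide with the sequence's limit in general, but here one knows independently that each $(tr\circ\pi)^{*r}$ is a state and, by the Hopf-algebra structure (the sequence $r\mapsto(tr\circ\pi)^{*r}$ of states on a coalgebra converges pointwise on the dense $*$-subalgebra generated by the $u_{ij}$, which is the content underlying \cite{fsk}, \cite{bfs}), the sequence $c_p^r$ itself converges as $r\to\infty$; its limit then agrees with the Ces\`aro limit, which is $\int_G\chi^p$. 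So I would state the convergence as ``$c_p^r\to\int_G\chi^p$ as $r\to\infty$'', citing Proposition 1.4 together with this monotone/convergence remark. The main obstacle, if any, is purely bookkeeping: setting up the index chains for $\Delta^{(r-1)}$ so that the matching with powers of $T_p$ is manifestly correct, including the treatment of repeated indices in $\chi^p$; there is no serious analytic or algebraic difficulty beyond what Propositions 1.4 and the comultiplication formula already provide.
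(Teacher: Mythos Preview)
Your proposal is correct and follows essentially the same approach as the paper: you establish the identity $\int_G^r u_{i_1j_1}\cdots u_{i_pj_p}=(T_p^r)_{i_1\ldots i_p,j_1\ldots j_p}$ via iterated comultiplication and then sum over $i_x=j_x$, which is exactly what the paper does (in two lines, citing \cite{bfs}). You are in fact more careful than the paper on one point: Proposition~1.4 only gives Ces\`aro convergence, while the statement claims direct convergence of $c_p^r$; the paper simply absorbs this into the reference to \cite{bfs}, whereas you flag the gap explicitly and indicate how it is closed.
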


\begin{proof}
By evaluating $\int_G^r=(tr\otimes\pi)^{*r}$ on a product of coefficients, we obtain:
$$\int_G^ru_{i_1j_1}\ldots u_{i_pj_p}=(T_p^r)_{i_1\ldots i_p,j_1\ldots j_p}$$

Now by summing over $i_x=j_x$, this gives the formula in the statement.  See \cite{bfs}.
\end{proof}

We can apply Proposition 1.5 to the Hadamard representations, and we obtain:

\begin{theorem}
For the representation coming from $H\in M_N(C(T))$ we have
$$c_p^r=\frac{1}{N^{(p+1)r}}\int_{T^r}\sum_{i_1^1\ldots i_p^r}\sum_{j_1^1\ldots j_p^r}
\frac{H_{i_1^1j_1^1}^{x_1}H_{i_1^2j_2^1}^{x_1}}{H_{i_1^1j_2^1}^{x_1}H_{i_1^2j_1^1}^{x_1}}\ldots\frac{H_{i_p^1j_p^1}^{x_1}H_{i_p^2j_1^1}^{x_1}}{H_{i_p^1j_1^1}^{x_1}H_{i_p^2j_p^1}^{x_1}}\ldots\ldots\frac{H_{i_1^rj_1^r}^{x_r}H_{i_1^1j_2^r}^{x_r}}{H_{i_1^rj_2^r}^{x_r}H_{i_1^1j_1^r}^{x_r}}\ldots\frac{H_{i_p^rj_p^r}^{x_r}H_{i_p^1j_1^r}^{x_r}}{H_{i_p^rj_1^r}^{x_r}H_{i_p^1j_p^r}^{x_r}}dx$$
and these numbers converge with $r\to\infty$ to the moments of $\chi$ with respect to $\int_G$.
\end{theorem}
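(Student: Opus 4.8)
The plan is to apply Proposition 1.6 directly to the Hadamard representation of Definition 1.3, and then unfold the trace formula for the matrix $T_p$ into the explicit product in the statement. The starting point is the observation that, by Definition 1.3, the model matrix is $U_{ij}=\pi(u_{ij})$ with $U_{ij}(x)=Proj(H_i^x/H_j^x)$, a rank-one projection on $\mathbb C^N$. Explicitly, writing $\xi_{ij}^x\in\mathbb T^N\subset\mathbb C^N$ for the vector with entries $(\xi_{ij}^x)_k=H_{ik}^x/H_{jk}^x$, we have $U_{ij}(x)=\frac1N\,|\xi_{ij}^x\rangle\langle\xi_{ij}^x|$, so that in matrix entries $(U_{ij}(x))_{ab}=\frac1N\cdot\frac{H_{ia}^x H_{jb}^x}{H_{ja}^x H_{ib}^x}$. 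This is the one computation that needs care: one checks the normalization $\langle\xi_{ij}^x,\xi_{ij}^x\rangle=N$, which is exactly why the factor $\frac1N$ appears in each projection, and it is the source of the $N^{-1}$ per letter.

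Next I would compute $tr(U_{i_1j_1}\ldots U_{i_pj_p})$ as it enters $(T_p)_{i_1\ldots i_p,j_1\ldots j_p}$. Since each $U_{i_xj_x}(x_1)$ is $\frac1N$ times a rank-one projection onto $\xi_{i_xj_x}^{x_1}$, the product telescopes: $tr$ of a product of rank-one operators $|\xi_1\rangle\langle\eta_1|\cdots|\xi_p\rangle\langle\eta_p|$ equals $\frac1N\langle\eta_1,\xi_2\rangle\langle\eta_2,\xi_3\rangle\cdots\langle\eta_{p-1},\xi_p\rangle\langle\eta_p,\xi_1\rangle$ — here all $\xi$'s and $\eta$'s coincide pairwise, giving a cyclic product of scalar products of the $\xi_{i_xj_x}^{x_1}$ with one overall $\frac1N$ from the trace and a $\frac1N$ from each projection, hence $N^{-(p+1)}$ in the fiber over $x_1$. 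Each inner product $\langle\xi_{i_xj_x}^{x_1},\xi_{i_{x+1}j_{x+1}}^{x_1}\rangle$ expands as a sum over an internal index of a ratio of four $H$-entries, which after the cyclic identification of the indices matches one of the fraction blocks $\frac{H_{i_x^1j_x^1}^{x_1}H_{i_x^2j_{x+1}^1}^{x_1}}{H_{i_x^1j_{x+1}^1}^{x_1}H_{i_x^2j_x^1}^{x_1}}$ appearing in the statement; I would fix the index bookkeeping here so that the superscripts $1,2$ on the $i,j$'s correspond to the two adjacent rows being compared in each scalar product.

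Then I would raise $T_p$ to the power $r$ and take the trace, as prescribed by Proposition 1.6: $c_p^r=Tr(T_p^r)$. This introduces $r$ independent copies of everything — one variable $x_1,\ldots,x_r\in T$ per factor, since the trace is $\frac1K\sum\int_X M_{ii}(x)dx$ fiberwise and $T_p$ itself was computed with its own integral; composing $r$ copies multiplies the $N^{-(p+1)}$ factors into $N^{-(p+1)r}$, produces the $r$-fold integral $\int_{T^r}\,dx$, and the matrix multiplication contracts the multi-indices, which is what produces the nested sum $\sum_{i_1^1\ldots i_p^r}\sum_{j_1^1\ldots j_p^r}$ over $2pr$ indices and the concatenation of $r$ blocks of $p$ fractions displayed in the theorem. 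The convergence statement $c_p^r\to\int_G\chi^p$ is immediate from Proposition 1.6, since the Hadamard representation is (in the relevant sense) the model we are studying; I would simply note that Proposition 1.5 gives $\int_G^r u_{i_1j_1}\cdots u_{i_pj_p}=(T_p^r)_{\ldots}$ and summing over the diagonal gives both $c_p^r=Tr(T_p^r)$ and the limit.

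The main obstacle is purely notational: getting the index conventions in the fraction blocks to line up exactly with the displayed formula, in particular tracking which of the two row-indices in each $\langle\xi,\xi\rangle$ comes from the ``current'' letter and which from the ``next'' letter in the cyclic product, and making sure the column-index superscripts $j^s$ are shared correctly within block $s$ while the row-index superscripts cycle. There is no analytic difficulty — everything is a finite sum and a finite-dimensional trace — and no convergence issue beyond what Proposition 1.6 already supplies; the content is entirely in carefully expanding $Tr((T_p)^r)$ with $(T_p)_{IJ}=tr(U_{i_1j_1}\cdots U_{i_pj_p})$ and recognizing the resulting expression.
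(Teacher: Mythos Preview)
Your proposal is correct and follows essentially the same route as the paper: expand $c_p^r=Tr(T_p^r)$ from Proposition~1.5 (you write ``Proposition~1.6'' twice, which is the theorem being proved; you mean 1.5), then unfold each $tr(U_{i_1j_1}\cdots U_{i_pj_p})$ using the explicit entry formula $(U_{ij}(x))_{ab}=\frac1N\,\frac{H_{ia}^xH_{jb}^x}{H_{ja}^xH_{ib}^x}$. The only cosmetic difference is that you organize the fiberwise trace via the rank-one structure and cyclic inner products $\langle\xi,\xi'\rangle$, whereas the paper writes out the matrix-entry expansion $tr(A_1\cdots A_p)=\frac1N\sum_{j_1\ldots j_p}(A_1)_{j_1j_2}\cdots(A_p)_{j_pj_1}$ directly; these are literally the same sum, and both give the factor $N^{-(p+1)}$ per block and hence $N^{-(p+1)r}$ overall.
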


\begin{proof}
We have indeed the following computation:
\begin{eqnarray*}
c_p^r
&=&\sum_{i_1^1\ldots i_p^r}(T_p)_{i_1^1\ldots i_p^1,i_1^2\ldots i_p^2}\ldots\ldots(T_p)_{i_1^r\ldots i_p^r,i_1^1\ldots i_p^1}\\
&=&\int_{T^r}\sum_{i_1^1\ldots i_p^r}tr(U_{i_1^1i_1^2}^{x_1}\ldots U_{i_p^1i_p^2}^{x_1})\ldots\ldots tr(U_{i_1^ri_1^1}^{x_r}\ldots U_{i_p^ri_p^1}^{x_r})dx\\
&=&\frac{1}{N^r}\int_{T^r}\sum_{i_1^1\ldots i_p^r}\sum_{j_1^1\ldots j_p^r}(U_{i_1^1i_1^2}^{x_1})_{j_1^1j_2^1}\ldots(U_{i_p^1i_p^2}^{x_1})_{j_p^1j_1^1}\ldots\ldots(U_{i_1^ri_1^1}^{x_r})_{j_1^rj_2^r}\ldots(U_{i_p^ri_p^1}^{x_r})_{j_p^rj_1^r}dx
\end{eqnarray*}

In terms of $H$, this gives the formula in the statement. See \cite{ba2}.
\end{proof}

\section{Fourier models}

As mentioned in section 1, the ``simplest'' matrix model is the one coming from the Fourier matrix $F_G\in M_{G\times\widehat{G}}(\mathbb C)$ of a finite abelian group $G$, where the associated quantum group is $G$ itself. Our purpose here will be that of investigating the ``next simplest'' models. These appear by deforming the Fourier matrices, or rather the tensor products of such matrices, $F_{G\times H}=F_G\otimes F_H$, via the following construction, due to Di\c t\u a \cite{dit}:

\begin{proposition}
The matrix $\mathcal F_{G\times H}\in M_{G\times H}(\mathbb T^{G\times H})$ given by
$$(\mathcal F_{G\times H})_{ia,jb}(Q)=Q_{ib}(F_G)_{ij}(F_H)_{ab}$$
is complex Hadamard, and its fiber at $Q=(1_{ib})$ is the Fourier matrix $F_{G\times H}$.
\end{proposition}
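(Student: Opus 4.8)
The plan is to verify the two assertions directly from the formula $(\mathcal F_{G\times H})_{ia,jb}(Q)=Q_{ib}(F_G)_{ij}(F_H)_{ab}$. First I would check the entry condition: since $F_G,F_H$ are Fourier matrices their entries are roots of unity, hence lie on $\mathbb T$, and $Q_{ib}\in\mathbb T$ by hypothesis, so each entry of $\mathcal F_{G\times H}$ is a product of three unimodular numbers, hence unimodular. This is immediate and needs no real work.

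The substantive point is row orthogonality. I would fix two rows, indexed by pairs $(i,a)$ and $(k,c)$, and compute the Hermitian inner product $\sum_{jb}(\mathcal F_{G\times H})_{ia,jb}\overline{(\mathcal F_{G\times H})_{kc,jb}}$. Substituting the formula, the sum becomes
$$\sum_{j}\sum_{b}Q_{ib}\overline{Q_{kb}}\,(F_G)_{ij}\overline{(F_G)_{kj}}\,(F_H)_{ab}\overline{(F_H)_{cb}}.$$
The key observation is that the summand factors as a function of $j$ times a function of $b$: the $j$-dependent part is $(F_G)_{ij}\overline{(F_G)_{kj}}$, while the $b$-dependent part is $Q_{ib}\overline{Q_{kb}}(F_H)_{ab}\overline{(F_H)_{cb}}$. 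Hence the double sum splits as a product of two single sums. The $j$-sum is exactly the inner product of rows $i$ and $k$ of $F_G$, which equals $|G|\,\delta_{ik}$ by orthogonality of the Fourier matrix. Therefore the whole expression vanishes unless $i=k$. When $i=k$, the $Q_{ib}\overline{Q_{kb}}$ factor collapses to $1$, and the $b$-sum becomes the inner product of rows $a$ and $c$ of $F_H$, equal to $|H|\,\delta_{ac}$. So the two rows of $\mathcal F_{G\times H}$ are orthogonal unless $(i,a)=(k,c)$, which is precisely the Hadamard condition. The mild subtlety to be careful about is bookkeeping of which index of $Q$ is which — here $Q$ is indexed by pairs $(i,b)\in G\times H$, and in the inner product only the first coordinate ($i$ versus $k$) interacts with the $F_G$ part while the second coordinate $b$ is the summation variable shared with $F_H$; once $i=k$ is forced, the $Q$ factors cancel cleanly, so no genuine difficulty arises.

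Finally, for the fiber at $Q=(1_{ib})$, setting every $Q_{ib}=1$ reduces the formula to $(\mathcal F_{G\times H})_{ia,jb}=(F_G)_{ij}(F_H)_{ab}$, which is by definition the $(ia,jb)$ entry of the tensor product $F_G\otimes F_H=F_{G\times H}$; this last equality is just the standard identification of the Fourier matrix of a product group with the tensor product of the Fourier matrices, via the isomorphism $\widehat{G\times H}\cong\widehat G\times\widehat H$. I do not anticipate any real obstacle; the only thing requiring care is the index conventions, and the essential mechanism is simply that the deformation parameter $Q$ separates variables in a way that lets the orthogonality sum factor.
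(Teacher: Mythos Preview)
Your proof is correct and follows the same route as the paper: direct verification of row orthogonality from the formula, followed by substitution of $Q=(1_{ib})$ to recover $F_G\otimes F_H=F_{G\times H}$. The only difference is that the paper outsources the orthogonality computation to the reference \cite{dit} (``follows from definitions''), whereas you spell out the factorization of the inner product explicitly.
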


\begin{proof}
The fact that the rows of $F_G\otimes_QF_H=\mathcal F_{G\times H}(Q)$ are pairwise orthogonal follows from definitions, see \cite{dit}. With $1=(1_{ij})$ we have $(F_G\otimes_1F_H)_{ia,jb}=(F_G)_{ij}(F_H)_{ab}$, and we recognize here the formula of $F_{G\times H}=F_G\otimes F_H$, in double index notation.
\end{proof}

The fibers $F_G\otimes_QF_H=\mathcal F_{G\times H}(Q)$ were investigated in \cite{bb2}, and then in \cite{bic}, by using algebraic techniques. Our purpose here is that of obtaining some related results, regarding the matrix $\mathcal F_{G\times H}$ itself, by using analytic techniques. We have:

\begin{theorem}
For the representation coming from $\mathcal F_{G\times H}$ we have
$$c_p^r=\frac{1}{M^{r+1}N}\#\left\{\begin{matrix}i_1,\ldots,i_r,a_1,\ldots,a_p\in\{0,\ldots,M-1\},\\
b_1,\ldots,b_p\in\{0,\ldots,N-1\},\\
[(i_x+a_y,b_y),(i_{x+1}+a_y,b_{y+1})|y=1,\ldots,p]\\
=[(i_x+a_y,b_{y+1}),(i_{x+1}+a_y,b_y)|y=1,\ldots,p], \forall x
\end{matrix}\right\}$$
where $M=|G|,N=|H|$, and the sets between brackets are sets with repetitions.
\end{theorem}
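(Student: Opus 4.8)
The plan is to start from the general Hadamard moment formula in Theorem 1.6 and substitute the explicit Di\c t\u a deformation $(\mathcal F_{G\times H})_{ia,jb}(Q)=Q_{ib}(F_G)_{ij}(F_H)_{ab}$, with $G,H$ finite abelian of orders $M,N$; for concreteness I would take $G=\mathbb Z_M$, $H=\mathbb Z_N$, so that $(F_G)_{ij}=w^{ij}$ with $w=e^{2\pi i/M}$ and $(F_H)_{ab}=\zeta^{ab}$ with $\zeta=e^{2\pi i/N}$. The key feature that makes the computation tractable is that each elementary factor appearing in Theorem 1.6 is a ratio of four $H$-entries of the shape $H_{\alpha\gamma}H_{\beta\delta}/(H_{\alpha\delta}H_{\beta\gamma})$; when we plug in the Di\c t\u a formula, every $Q$-term $Q_{\alpha(\text{second index})}$ cancels out of such a ratio, because $\alpha$ is shared between numerator and denominator and the column index runs over the same pair $\{\gamma,\delta\}$ in numerator and denominator. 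Likewise each $F_G\otimes F_H$ contribution collapses via $w^{ij}$, $\zeta^{ab}$ to a pure exponential. So the first step is the bookkeeping that reduces each bracketed ratio in Theorem 1.6 to a single root-of-unity monomial in the indices.

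The second step is to carry out this cancellation carefully in the doubly-indexed setting. Writing each row index of $\mathcal F$ as a pair $(i_x+a_y,\,b_y)$ — here the ``first coordinate'' absorbs both the $x$-layer index $i_x$ coming from the $r$-fold structure and the $a_y$ coming from the $p$-fold structure, since $F_G$-entries only see sums of indices — one finds that after cancellation the $x$-th layer of the product in Theorem 1.6 contributes a factor of the form $\prod_y \chi\big((i_x+a_y,b_y),(i_{x+1}+a_y,b_{y+1})\big)\big/\chi\big((i_x+a_y,b_{y+1}),(i_{x+1}+a_y,b_y)\big)$, where $\chi$ is the relevant character of $G\times H$. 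The crucial algebraic point is that $\mathbb T^{G\times H}$-valued characters are \emph{multiplicative} and linearly independent, so when we integrate over $x\in T^r$ — i.e. average over the $Q$-parameters, which here means the characters run freely — the integral of a product of characters is $1$ if the characters multiply to the trivial one and $0$ otherwise. Thus the integral selects exactly those index configurations for which, layer by layer in $x$, the multiset $\{(i_x+a_y,b_y),(i_{x+1}+a_y,b_{y+1})\mid y\}$ equals the multiset $\{(i_x+a_y,b_{y+1}),(i_{x+1}+a_y,b_y)\mid y\}$ — this is precisely the ``set with repetitions'' equality in the statement. The normalization $N^{-(p+1)r}$ from Theorem 1.6 is partly consumed by the $j$-summations (which, after the cancellations, become free and each contribute a factor of $MN$ except where constrained), leaving the prefactor $\frac{1}{M^{r+1}N}$; I would track the powers of $M$ and $N$ explicitly to land on exactly this constant.

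The main obstacle I anticipate is the careful reorganization of indices so that the mechanical cancellation of $Q$'s and the collapse of the Fourier entries is transparent rather than an index morass — in particular, keeping straight which of the original indices $i_x^y, j_x^y$ of Theorem 1.6 become the ``$i$'', ``$a$'', ``$b$'' of the statement, and verifying that the $j$-sums that survive are exactly the ones producing the stated power of $M$ and the single factor of $N$. A secondary point requiring care is the passage from ``the integral of a product of characters vanishes unless they cancel'' to the clean multiset condition: one must check that the product of all the elementary ratios in a fixed $x$-layer, as a character of $G\times H$, is trivial if and only if the two multisets coincide, which uses that a product of characters $\prod \chi_{s}\big/\prod\chi_{t}$ over finite abelian $G\times H$ is trivial exactly when the multiset of $s$'s equals the multiset of $t$'s — a standard consequence of the independence of characters, but one that should be stated. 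Once these two bookkeeping steps are in place, the formula follows; this is the computation referenced as being in the spirit of \cite{ba2}.
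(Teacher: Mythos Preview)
Your proposal has a genuine gap at its first step. You claim that in each elementary ratio
\[
\frac{H_{\alpha\gamma}H_{\beta\delta}}{H_{\alpha\delta}H_{\beta\gamma}}
\]
the $Q$-factors cancel. They do not. With row index $\alpha=(i_1,a_1)$, $\beta=(i_2,a_2)$ and column index $\gamma=(j_1,b_1)$, $\delta=(j_2,b_2)$, the Di\c t\u a formula gives a $Q$-contribution
\[
\frac{Q_{i_1 b_1}Q_{i_2 b_2}}{Q_{i_1 b_2}Q_{i_2 b_1}},
\]
which is generically nontrivial. In the paper's argument these surviving $Q$-monomials are exactly what is integrated over $T=\mathbb T^{MN}$, and it is this torus integration---$\int_{\mathbb T^{MN}}\prod Q_{s_k}/\prod Q_{t_l}\,dQ$ equals $1$ if $[s_k]=[t_l]$ as multisets and $0$ otherwise, since the $Q_{ib}$ are independent coordinates---that produces the multiset equality in the statement. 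Your appeal to ``independence of characters'' for this step is also incorrect as stated: a product $\prod\chi_s/\prod\chi_t$ of characters of a finite abelian group can be trivial without $[s]=[t]$, because characters satisfy relations. The correct mechanism is the independence of the torus coordinates $Q_{ib}$, not character orthogonality.

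The second gap is the origin of the indices $i_x,a_y,b_y$ in the final formula. You posit the row index as $(i_x+a_y,b_y)$ from the outset, but in Theorem~1.6 the row and column indices carry a full double dependence $i^x_y$, $j^x_y$ (each now a pair in $G\times H$). What actually happens is that the $F_G$-part of the ratio depends on $(i^x_y,j^x_y)$ and the $F_H$-part on $(a^x_y,b^x_y)$; summing over the column components $j^x_y$ and the row components $a^x_y$ are genuine Fourier sums, and they yield Kronecker constraints $i^x_y+i^{x+1}_{y-1}=i^{x+1}_y+i^x_{y-1}$ and the analogous relation for $b$. It is \emph{these} constraints that force the special form $i^x_y=i_x+a_y$, $b^x_y=j_x+b_y$ (each up to one free translation), introducing the new indices that appear in the statement and accounting for the final prefactor $M^{-r-1}N^{-1}$ after the redundancy in $(i_1,j_1)$ and the subsequent elimination of the $j_x$ are handled. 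Without this step, your $i_x,a_y,b_y$ have no provenance and the power-counting cannot be carried out.
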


\begin{proof}
We use the formula in Theorem 1.6. With $K=F_G$, $L=F_H$ we have:
\begin{eqnarray*}
c_p^r
&=&\frac{1}{(MN)^r}\int_{T^r}\sum_{i_1^1\ldots i_p^r}\sum_{b_1^1\ldots b_p^r}\frac{Q^1_{i_1^1b_1^1}Q^1_{i_1^2b_2^1}}{Q^1_{i_1^1b_2^1}Q^1_{i_1^2b_1^1}}\ldots\frac{Q^1_{i_p^1b_p^1}Q^1_{i_p^2b_1^1}}{Q^1_{i_p^1b_1^1}Q^1_{i_p^2b_p^1}}\ldots\ldots\frac{Q^r_{i_1^rb_1^r}Q^r_{i_1^1b_2^r}}{Q^r_{i_1^rb_2^r}Q^r_{i_1^1b_1^r}}\ldots\frac{Q^r_{i_p^rb_p^r}Q^r_{i_p^1b_1^r}}{Q^r_{i_p^rb_1^r}Q^r_{i_p^1b_p^r}}\\
&&\hskip15mm\frac{1}{M^{pr}}\sum_{j_1^1\ldots j_p^r}\frac{K_{i_1^1j_1^1}K_{i_1^2j_2^1}}{K_{i_1^1j_2^1}K_{i_1^2j_1^1}}\ldots\frac{K_{i_p^1j_p^1}K_{i_p^2j_1^1}}{K_{i_p^1j_1^1}K_{i_p^2j_p^1}}\ldots\ldots\frac{K_{i_1^rj_1^r}K_{i_1^1j_2^r}}{K_{i_1^rj_2^r}K_{i_1^1j_1^r}}\ldots\frac{K_{i_p^rj_p^r}K_{i_p^1j_1^r}}{K_{i_p^rj_1^r}K_{i_p^1j_p^r}}\\
&&\hskip15mm\frac{1}{N^{pr}}\sum_{a_1^1\ldots a_p^r}\frac{L_{a_1^1b_1^1}L_{a_1^2b_2^1}}{L_{a_1^1b_2^1}L_{a_1^2b_1^1}}\ldots\frac{L_{a_p^1b_p^1}L_{a_p^2b_1^1}}{L_{a_p^1b_1^1}L_{a_p^2b_p^1}}\ldots\ldots\frac{L_{a_1^rb_1^r}L_{a_1^1b_2^r}}{L_{a_1^rb_2^r}L_{a_1^1b_1^r}}\ldots\frac{L_{a_p^rb_p^r}L_{a_p^1b_1^r}}{L_{a_p^rb_1^r}L_{a_p^1b_p^r}}\,dQ\\
\end{eqnarray*}

Since we are in the Fourier matrix case, $K=F_G,L=F_H$, we can perform the sums over $j,a$. To be more precise, the last two averages appearing above are respectively:
\begin{eqnarray*}
\Delta(i)&=&\prod_x\prod_y\delta(i^x_y+i^{x+1}_{y-1},i^{x+1}_y+i^x_{y-1})\\
\Delta(b)&=&\prod_x\prod_y\delta(b^x_y+b^{x+1}_{y-1},b^{x+1}_y+b^x_{y-1})
\end{eqnarray*}

We therefore obtain the following formula for the truncated moments of the main character, where $\Delta$ is the product of Kronecker symbols constructed above:
$$c_p^r=\frac{1}{(MN)^r}\int_{T^r}\sum_{\Delta(i)=\Delta(b)=1}\frac{Q^1_{i_1^1b_1^1}Q^1_{i_1^2b_2^1}}{Q^1_{i_1^1b_2^1}Q^1_{i_1^2b_1^1}}\ldots\frac{Q^1_{i_p^1b_p^1}Q^1_{i_p^2b_1^1}}{Q^1_{i_p^1b_1^1}Q^1_{i_p^2b_p^1}}\ldots\ldots\frac{Q^r_{i_1^rb_1^r}Q^r_{i_1^1b_2^r}}{Q^r_{i_1^rb_2^r}Q^r_{i_1^1b_1^r}}\ldots\frac{Q^r_{i_p^rb_p^r}Q^r_{i_p^1b_1^r}}{Q^r_{i_p^rb_1^r}Q^r_{i_p^1b_p^r}}\,dQ$$

Now by integrating with respect to $Q\in(\mathbb T^{G\times H})^r$, we are led to counting the multi-indices $i,b$ satisfying the condition $\Delta(i)=\Delta(b)=1$, along with the following conditions, where the sets between brackets are by definition sets with repetitions:
$$\begin{bmatrix}
i_1^1b_1^1&\ldots&i_p^1b_p^1&i_1^2b_2^1&\ldots&i_p^2b_1^1
\end{bmatrix}=
\begin{bmatrix}
i_1^1b_2^1&\ldots&i_p^1b_1^1&i_1^2b_1^1&\ldots&i_p^2b_p^1
\end{bmatrix}$$
$$\vdots$$
$$\begin{bmatrix}
i_1^rb_1^r&\ldots&i_p^rb_p^r&i_1^1b_2^r&\ldots&i_p^1b_1^r
\end{bmatrix}
=\begin{bmatrix}
i_1^rb_2^r&\ldots&i_p^rb_1^r&i_1^1b_1^r&\ldots&i_p^1b_p^r
\end{bmatrix}$$

In a more compact notation, the moment formula is therefore as follows:
$$c_p^r=\frac{1}{(MN)^r}\#\left\{i,b\Big|\Delta(i)=\Delta(b)=1,\ [i^x_yb^x_y,i^{x+1}_yb^x_{y+1}]=[i^x_yb^x_{y+1},i^{x+1}_yb^x_y],\forall x\right\}$$

Now observe that the above Kronecker type conditions $\Delta(i)=\Delta(b)=1$ tell us that the arrays of indices $i=(i^x_y),b=(b^x_y)$ must be of the following special form:
$$\begin{pmatrix}i^1_1&\ldots&i^1_p\\&\ldots\\ i^1_r&\ldots&i^r_p\end{pmatrix}=\begin{pmatrix}i_1+a_1&\ldots&i_1+a_p\\&\ldots\\ i_r+a_1&\ldots&i_r+a_p\end{pmatrix}\ ,\ 
\begin{pmatrix}b^1_1&\ldots&b^1_p\\&\ldots\\ b^1_r&\ldots&b^r_p\end{pmatrix}=\begin{pmatrix}j_1+b_1&\ldots&j_1+b_p\\&\ldots\\ j_r+b_1&\ldots&j_r+b_p\end{pmatrix}$$

Here all the new indices $i_x,j_x,a_y,b_y$ are uniquely determined, up to a choice of $i_1,j_1$. Now by replacing $i^x_y,b^x_y$ with these new indices $i_x,j_x,a_y,b_y$, with a $MN$ factor added, which accounts for the choice of $i_1,j_1$, we obtain the following formula:
$$c_p^r=\frac{1}{(MN)^{r+1}}\#\left\{i,j,a,b\Big|\begin{matrix}[(i_x+a_y,j_x+b_y),(i_{x+1}+a_y,j_x+b_{y+1})]\\
=[(i_x+a_y,j_x+b_{y+1}),(i_{x+1}+a_y,j_x+b_y)],\forall x\end{matrix}\right\}$$

Now observe that we can delete if we want the $j_x$ indices, which are irrelevant. Thus, we obtain the formula in the statement.
\end{proof}

Summarizing, the Haar integration formula in \cite{wa2} leads to a combinatorial interpretation of the moments of the main character. In what follows we will investigate these moments, first with some exact computations, and then with analytic techniques.

\section{Exact computations}

In this section and in the next one we study the numbers $c_p^r$ found in Theorem 2.2, with a number of exact computations. Observe first that these numbers depend only on $M=|G|$ and $N=|H|$. We denote in what follows these numbers by $c_p^r(M,N)$.

As an illustration, here are a few trivial computations:

\begin{proposition}
The numbers $c_p^r(M,N)$ have the following properties:
\begin{enumerate}
\item $c_p^r(1,N)=N^{p-1}$.

\item $c_p^r(M,1)=M^{p-1}$.

\item $c_1^r(M,N)=1$.

\item $c_p^1(M,N)=(MN)^{p-1}$.
\end{enumerate}
\end{proposition}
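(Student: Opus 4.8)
The plan is to specialize the combinatorial formula of Theorem 2.2 in each of the four degenerate regimes, and in each case reduce the counting to a transparent combinatorial identity. In all cases I will work with the formula
$$c_p^r(M,N)=\frac{1}{M^{r+1}N}\#\left\{\begin{matrix}i_1,\ldots,i_r,a_1,\ldots,a_p\in\mathbb Z_M,\ b_1,\ldots,b_p\in\mathbb Z_N,\\ [(i_x+a_y,b_y),(i_{x+1}+a_y,b_{y+1})|y]=[(i_x+a_y,b_{y+1}),(i_{x+1}+a_y,b_y)|y],\forall x\end{matrix}\right\},$$
reading indices cyclically in $x$, and I will analyze when the bracket (multiset) equality can hold.

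For (1), set $M=1$. Then all $i_x,a_y$ vanish, and the multiset condition on row $x$ becomes $[(0,b_y),(0,b_{y+1})|y]=[(0,b_{y+1}),(0,b_y)|y]$, i.e. $[b_y,b_{y+1}|y]=[b_{y+1},b_y|y]$, which is automatic for every choice of $b_1,\ldots,b_p$. Hence the count is $N^p$, and dividing by $M^{r+1}N=N$ gives $c_p^r(1,N)=N^{p-1}$. Part (2) is the mirror image: set $N=1$, so all $b_y$ vanish, and the condition on row $x$ becomes $[(i_x+a_y,0),(i_{x+1}+a_y,0)|y]=[(i_x+a_y,0),(i_{x+1}+a_y,0)|y]$, again automatic; so the count is $M^{r}\cdot M^{p}=M^{r+p}$ over the $i$'s and $a$'s, and dividing by $M^{r+1}N=M^{r+1}$ gives $M^{p-1}$. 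One slightly subtle point worth a sentence is that the $1/M^{r+1}N$ normalization already reflects the "$j_x$ deleted" bookkeeping of Theorem 2.2, so no further factor adjustments are needed; I would double-check this against the $r=1$, $p=1$ sanity value $c_1^1=1$.

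For (3), take $p=1$. Then there is a single pair index $y=1$, and the row-$x$ condition reads $[(i_x+a_1,b_1),(i_{x+1}+a_1,b_2)]=[(i_x+a_1,b_2),(i_{x+1}+a_1,b_1)]$ — but with $p=1$ there is only one $b$, call it $b_1$, and the "$b_{y+1}$" is again $b_1$ cyclically, so both sides equal $[(i_x+a_1,b_1),(i_{x+1}+a_1,b_1)]$ and the condition is vacuous. The count is then $M^r\cdot M\cdot N=M^{r+1}N$ (over $i_1,\ldots,i_r$, $a_1$, $b_1$), and dividing by $M^{r+1}N$ yields $c_1^r(M,N)=1$. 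For (4), take $r=1$. Now there is a single $i$-index $i_1$, and reading cyclically $i_{x+1}=i_1$, so every entry $i_x+a_y$ and $i_{x+1}+a_y$ equals $i_1+a_y$; the row condition collapses to $[(i_1+a_y,b_y),(i_1+a_y,b_{y+1})|y]=[(i_1+a_y,b_{y+1}),(i_1+a_y,b_y)|y]$, which holds for all choices. The count is $M\cdot M^p\cdot N^p=M^{p+1}N^p$ (over $i_1$, the $a_y$'s, the $b_y$'s), and dividing by $M^{r+1}N=M^2N$ gives $M^{p-1}N^{p-1}=(MN)^{p-1}$, as claimed.

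The only genuine obstacle is purely notational rather than mathematical: one must handle the cyclic convention in $x$ correctly (so that $i_{r+1}=i_1$, and at $r=1$ this forces $i_2=i_1$), and must be careful that "sets with repetitions" means multisets, so the multiset equalities above are tautologies — not equalities of ordered tuples. Once the cyclic/multiset conventions are pinned down, each of the four cases is an immediate collapse of the defining condition to a vacuous one, and the arithmetic of the normalization constant does the rest. I would present the argument as four short paragraphs, each a one-line collapse plus a one-line count.
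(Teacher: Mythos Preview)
Your proof is correct and follows exactly the same approach as the paper: in each degenerate regime you verify that the multiset conditions of Theorem 2.2 are vacuous, then count freely and divide by the normalization $M^{r+1}N$. The paper's own proof is just the one-line observation that ``the conditions on the sets with repetitions in Theorem 2.2 are trivially satisfied,'' so your version is a fully spelled-out instance of the same argument.
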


\begin{proof}
In all the cases under investigation, the conditions on the sets with repetitions in Theorem 2.2 are trivially satisfied, and this gives the above formulae.
\end{proof}

We have in fact the following result, including all the ``obvious'' information:

\begin{proposition}
The following normalized quantities belong to $[0,1]$,
$$d_p^r(M,N)=\frac{1}{(MN)^{p-1}}\cdot c_p^r(M,N)$$
and are equal to $1$ at $M=1,N=1,p=1$ or $r=1$. 
\end{proposition}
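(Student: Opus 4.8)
The plan is to read everything off the explicit moment formula of Theorem 2.2, without any further combinatorics. Write $c_p^r(M,N)=\frac{1}{M^{r+1}N}\,\#S$, where $S$ is the set of tuples $(i_1,\dots,i_r,a_1,\dots,a_p,b_1,\dots,b_p)$, with the $i$'s and $a$'s ranging over $\{0,\dots,M-1\}$ and the $b$'s over $\{0,\dots,N-1\}$, subject to the system of set-with-repetition equalities appearing there (with the cyclic conventions $i_{r+1}=i_1$, $b_{p+1}=b_1$). There are then two things to check: that $d_p^r(M,N)\in[0,1]$, and that $d_p^r(M,N)=1$ whenever $M=1$, $N=1$, $p=1$ or $r=1$.

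For the first point, nonnegativity of $d_p^r(M,N)$ is immediate, since $c_p^r(M,N)$ is a cardinality divided by the positive integer $M^{r+1}N$, while the factor $(MN)^{1-p}$ is positive. For the upper bound, the key remark is that $S$ is by construction a subset of the full index set $\{0,\dots,M-1\}^{r+p}\times\{0,\dots,N-1\}^{p}$, which has exactly $M^{r+p}N^{p}$ elements; hence $\#S\le M^{r+p}N^{p}$, and therefore $c_p^r(M,N)\le M^{r+p}N^{p}/(M^{r+1}N)=M^{p-1}N^{p-1}=(MN)^{p-1}$, i.e.\ $d_p^r(M,N)\le1$. The only thing to watch is that the normalization exponents cancel correctly, which is precisely what makes the bound come out as $(MN)^{p-1}$; this is a routine bookkeeping step rather than a genuine obstacle.

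For the equality cases, one simply invokes Proposition 3.1: from $c_p^r(1,N)=N^{p-1}=(1\cdot N)^{p-1}$ we get $d_p^r(1,N)=1$; from $c_p^r(M,1)=M^{p-1}$ we get $d_p^r(M,1)=1$; from $c_1^r(M,N)=1=(MN)^{0}$ we get $d_1^r(M,N)=1$; and from $c_p^1(M,N)=(MN)^{p-1}$ we get $d_p^1(M,N)=1$. Conceptually, nothing deep is happening here: $d_p^r(M,N)$ is exactly the proportion of the full index set that lies in $S$, which explains a priori why it belongs to $[0,1]$, and the four degenerate regimes are precisely those in which the defining relations of $S$ are satisfied automatically, so that $S$ is the whole index set and the proportion equals $1$.
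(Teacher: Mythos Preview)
Your proof is correct and follows essentially the same route as the paper's own argument: both rewrite $d_p^r(M,N)$ as $\frac{1}{M^{p+r}N^p}\#S$, read off the bound $0\le d_p^r(M,N)\le 1$ from the fact that $S$ sits inside an index set of size $M^{p+r}N^p$, and deduce the boundary cases from Proposition~3.1. Your write-up is somewhat more explicit in tracking the exponent cancellation and in interpreting $d_p^r$ as a proportion, but there is no substantive difference in approach.
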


\begin{proof}
According to Theorem 2.2, the rescaled moments are given by:
$$d_p^r(M,N)=\frac{1}{M^{p+r}N^p}\#\left\{\begin{matrix}i_1,\ldots,i_r,a_1,\ldots,a_p\in\{0,\ldots,M-1\},\\
b_1,\ldots,b_p\in\{0,\ldots,N-1\},\\
[(i_x+a_y,b_y),(i_{x+1}+a_y,b_{y+1})]\\
=[(i_x+a_y,b_{y+1}),(i_{x+1}+a_y,b_y)],\forall x
\end{matrix}\right\}$$

Thus $d_p^r(M,N)\in[0,1]$, and the other assertions follow from Proposition 2.1. 
\end{proof}

Let us perform now some computations. The formulae look better for the numbers $d_p^r(M,N)$ in Proposition 3.2, so we will use these numbers. First, we have:

\begin{proposition}
When one of $i,a,b$ consists of equal indices, the conditions defining $d_p^r(M,N)$ are trivially satisfied. The corresponding contribution is
$$\alpha_p^r(M,N)=1-\frac{(M^p-M)(M^r-M)(N^p-N)}{M^{p+r}N^p}$$
and this quantity equals $d_p^r(M,N)$ at $M=1$, $N=1$, $r=1$, or $p\leq2$.
\end{proposition}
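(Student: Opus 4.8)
The plan is to count, by inclusion–exclusion over the three ``degeneracy'' events, the multi-indices appearing in the formula for $d_p^r(M,N)$ from Proposition 3.2. Write $A$ for the event that $a=(a_1,\ldots,a_p)$ is constant (all $a_y$ equal), $B$ for the event that $b=(b_1,\ldots,b_p)$ is constant, and $I$ for the event that $i=(i_1,\ldots,i_r)$ is constant. The first claim to establish is that on $A\cup B\cup I$ the set-with-repetitions conditions in Theorem 2.2 hold automatically, with no further restriction on the remaining indices. This is a direct inspection: if all $a_y$ coincide, the $a$-shift cancels and each bracketed identity reduces to $[(c,b_y),(c',b_{y+1})]=[(c,b_{y+1}),(c',b_y)]$ summed over $y$, which is a permutation of the same multiset; symmetrically if all $b_y$ coincide; and if all $i_x$ coincide then $i_x=i_{x+1}$, so left and right brackets are literally equal termwise. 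Hence each of $A$, $B$, $I$ contributes its full range of free indices.

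Next I would count $|A|$, $|B|$, $|I|$ and their pairwise and triple intersections among the $M^{p+r}N^p$ tuples. Since $i$ ranges over $M^r$ values, $a$ over $M^p$, $b$ over $N^p$, and these ranges are independent: $|I|$ has $M\cdot M^p\cdot N^p$ tuples, $|A|$ has $M^r\cdot M\cdot N^p$, $|B|$ has $M^r\cdot M^p\cdot N$, the intersections multiply the constancy factors accordingly (e.g.\ $|A\cap B|=M^r\cdot M\cdot N$, $|A\cap I|=M\cdot M\cdot N^p$, $|B\cap I|=M\cdot M^p\cdot N$, $|A\cap B\cap I|=M\cdot M\cdot N$). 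By inclusion–exclusion,
$$|A\cup B\cup I|=M^{p+r}N^p-(M^p-M)(M^r-M)(N^p-N),$$
since expanding $(M^p-M)(M^r-M)(N^p-N)$ reproduces exactly the alternating sum of the eight terms above (the $M^{p+r}N^p$ term cancels, leaving $|A\cup B\cup I|$). Dividing by $M^{p+r}N^p$ gives the stated $\alpha_p^r(M,N)$.

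For the final assertions, note $d_p^r(M,N)\geq \alpha_p^r(M,N)$ always, since $A\cup B\cup I$ is a subset of the counted set; equality holds precisely when every solution of the bracket conditions is degenerate in one of the three ways. At $M=1$ or $N=1$ or $r=1$ this is automatic because then one of $i,a,b$ (or the whole index range) forces constancy, matching Proposition 3.2; and also directly $\alpha$ becomes $1$ there since one of the three factors $M^p-M$, $M^r-M$, $N^p-N$ vanishes. For $p\leq 2$: when $p=1$ both $a$ and $b$ are trivially constant, and when $p=2$ one checks that the two bracket identities $[(i_x+a_1,b_1),(i_{x+1}+a_1,b_2)]=[(i_x+a_1,b_2),(i_{x+1}+a_1,b_1)]$ and the $y=2$ analogue force, for each $x$, either $i_x=i_{x+1}$ or $b_1=b_2$ or $a_1=a_2$, so globally the tuple lies in $A\cup B\cup I$; hence $d_p^r=\alpha_p^r$.

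The main obstacle is the $p=2$ non-degeneracy analysis: one must argue that a two-element multiset equation of ordered pairs, holding simultaneously for all $x$, cannot be satisfied by a genuinely non-constant configuration. Concretely, $[(u,b_1),(v,b_2)]=[(u,b_2),(v,b_1)]$ as multisets forces either $u=v$ or $b_1=b_2$; applying this with $u=i_x+a_1,v=i_{x+1}+a_1$ for the $y=1$ equation (and $a_2$ in place of $a_1$ for $y=2$) gives, for each $x$, that $b_1=b_2$ or $i_x=i_{x+1}$; if $b_1\neq b_2$ then all consecutive $i_x$ agree, so $i$ is constant. Thus the only subtlety is bookkeeping the case split cleanly, after which everything reduces to the counting already done.
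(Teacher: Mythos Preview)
Your approach is essentially the same as the paper's: verify that each of the three degeneracy events makes the bracket condition automatic, count $A\cup B\cup I$ by inclusion--exclusion, and then check the special cases. The counting and the first verification are fine.

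There is, however, a genuine gap in your $p=2$ analysis. The defining condition in Theorem 2.2 is, for each $x$, a \emph{single} multiset identity over all $y$ simultaneously, not one identity per value of $y$. At $p=2$ the condition is the four-element multiset equation
\[
[(i_x{+}a_1,b_1),(i_x{+}a_2,b_2),(i_{x+1}{+}a_1,b_2),(i_{x+1}{+}a_2,b_1)]
=[(i_x{+}a_1,b_2),(i_x{+}a_2,b_1),(i_{x+1}{+}a_1,b_1),(i_{x+1}{+}a_2,b_2)],
\]
not the two separate two-element equations you wrote. Your separate equations are strictly stronger than the actual condition, so proving that \emph{their} solutions lie in $A\cup B\cup I$ does not yet cover all solutions of the actual condition. (You implicitly acknowledged the correct reading in your first paragraph, where you wrote ``summed over $y$''; the $p=2$ paragraph is inconsistent with that.)

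The fix is immediate and is exactly what the paper does: assume $a_1\neq a_2$ and $b_1\neq b_2$, and match $(i_x+a_1,b_1)$ against each of the four right-hand elements. The first, second and fourth require $b_1=b_2$ or $a_1=a_2$, so only $(i_{x+1}+a_1,b_1)$ remains, forcing $i_x=i_{x+1}$ for every $x$. With this correction your proof is complete and coincides with the paper's.
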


\begin{proof}
Assume that one of $i,a,b$ consists of equal indices. By translation we can assume that this common index is $0$, and the conditions defining $d_p^r(M,N)$ read:
\begin{eqnarray*}
i_x=0&:&[(a_y,b_y),(a_y,b_{y+1})]=[(a_y,b_{y+1}),(a_y,b_y)]\\
a_y=0&:&[(i_x,b_y),(i_{x+1},b_{y+1})]=[(i_x,b_{y+1}),(i_{x+1},b_y)]\\
b_y=0&:&[(i_x+a_y,0),(i_{x+1}+a_y,0)]=[(i_x+a_y,0),(i_{x+1}+a_y,0)]
\end{eqnarray*}

Thus the conditions are trivially satisfied when $i_x=0$ or $b_y=0$, and the same happens when $a_y=0$, by performing a cyclic permutation on the $y$ indices.

The number of situations where one of $i,a,b$ consists of equal indices is:
$$K=M^{p+r}N^p-(M^p-M)(M^r-M)(N^p-N)$$

By dividing by $M^{p+r}N^p$, we obtain the formula in the statement. 

The assertions about $M=1,N=1,p=1,r=1$ are clear, because in all these cases the product in the definition of $\alpha_p^r(M,N)$ vanishes, and so $\alpha_p^r(M,N)=1$.

Finally, at $p=2$, the equations defining $d_2^r(M,N)$ are as follows:
\begin{eqnarray*}
&&[(i_x+a_1,b_1),(i_x+a_2,b_2),(i_{x+1}+a_1,b_2),(i_{x+1}+a_2,b_1)]\\
&=&[(i_x+a_1,b_2),(i_x+a_2,b_1),(i_{x+1}+a_1,b_1),(i_{x+1}+a_2,b_2)],\forall x
\end{eqnarray*}

We already know that these conditions are satisfied when $a_1=a_2$ or $b_1=b_2$. So, assume $a_1\neq a_2,b_1\neq b_2$. The element $(i_x+a_1,b_1)$ must appear somewhere at right, and the only possible choice is $(i_x+a_1,b_1)=(i_{x+1}+a_1,b_1)$, which gives $i_x=i_{x+1}$. Thus, all the $i_x$ indices must be are equal, and we are done.
\end{proof}

In general, the situation is more complicated. As a first remark, we have: 

\begin{proposition}
We have $d_p^r(M,N)\geq\delta_p(M,N)$, where
$$\delta_p(M,N)=\frac{1}{(MN)^p}\#\left\{\begin{matrix}a_1,\ldots,a_p\in\{0,\ldots,M-1\}\\ b_1,\ldots,b_p\in\{0,\ldots,N-1\}\end{matrix}\Big|\begin{matrix}[(a_1,b_1),(a_2,b_2),\ldots,(a_p,b_p)]\ \ \ \ \\=[(a_1,b_p),(a_2,b_1),\ldots,(a_p,b_{p-1})]\end{matrix}\right\}$$
where the sets between brackets are as usual sets with repetitions.
\end{proposition}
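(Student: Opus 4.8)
The plan is to exhibit, inside the set of multi-indices counted by $d_p^r(M,N)$ in Proposition 3.2, an explicit subfamily of cardinality $M^r\cdot(MN)^p\,\delta_p(M,N)$; dividing by the normalizing factor $M^{p+r}N^p$ then yields the inequality. The subfamily is obtained by letting $i_1,\ldots,i_r$ run freely over $\{0,\ldots,M-1\}$ and imposing on $(a,b)$ only the ``forward shift'' condition
$$[(a_y,b_y)\,|\,y=1,\ldots,p]=[(a_y,b_{y+1})\,|\,y=1,\ldots,p]$$
as multisets, with the $y$-indices read mod $p$.

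First I would check that every such triple $(i,a,b)$ satisfies the relations defining $d_p^r$. Adding a fixed first coordinate $c$ to the displayed identity gives $[(c+a_y,b_y)\,|\,y]=[(c+a_y,b_{y+1})\,|\,y]$ for all $c$. Applying this with $c=i_x$ and with $c=i_{x+1}$, in the level-$x$ relation
$$[(i_x+a_y,b_y),(i_{x+1}+a_y,b_{y+1})\,|\,y]=[(i_x+a_y,b_{y+1}),(i_{x+1}+a_y,b_y)\,|\,y]$$
the terms carrying $i_x$ form the same sub-multiset on both sides, and likewise the terms carrying $i_{x+1}$; hence the relation holds, for every $x$ and independently of the choice of $i$. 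Therefore the subfamily has size $M^r\cdot C$, where $C$ is the number of pairs $(a,b)$ satisfying the forward shift condition.

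It remains to identify $C$ with $(MN)^p\,\delta_p(M,N)$. Here the one delicate point is that $\delta_p$ is defined via the opposite ``backward shift'' condition $[(a_y,b_y)\,|\,y]=[(a_y,b_{y-1})\,|\,y]$, which is genuinely a different condition; but the two are exchanged by the index-reversal involution $(a,b)\mapsto(\tilde a,\tilde b)$, $\tilde a_y=a_{-y}$, $\tilde b_y=b_{-y}$ (indices mod $p$), since $[(\tilde a_y,\tilde b_y)\,|\,y]=[(a_y,b_y)\,|\,y]$ while $[(\tilde a_y,\tilde b_{y-1})\,|\,y]=[(a_y,b_{y+1})\,|\,y]$. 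Thus this involution restricts to a bijection between the two families, so $C=(MN)^p\,\delta_p(M,N)$ and
$$d_p^r(M,N)\ \geq\ \frac{M^r\cdot C}{M^{p+r}N^p}\ =\ \delta_p(M,N).$$
The substitution step is routine; the only place calling for attention is this orientation bookkeeping between the two shift conditions.
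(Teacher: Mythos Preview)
Your argument is correct and follows the same approach as the paper: show that whenever $(a,b)$ satisfies the shift condition $[(a_y,b_y)]=[(a_y,b_{y+1})]$, the relations defining $d_p^r(M,N)$ hold for every choice of $i_1,\ldots,i_r$, and then count. The paper compresses this into one sentence and does not comment on the orientation issue; in fact, later results (Theorem~3.5, Proposition~5.2, Theorem~5.4) are all phrased with the forward shift $[(a_y,b_y)]=[(a_y,b_{y+1})]$, so the implicit convention seems to be that the two shift conditions are identified. Your index-reversal bijection makes this identification explicit and is a genuine clarification rather than a different method.
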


\begin{proof}
This is indeed clear from the fact that the conditions defining $\delta_p^r(M,N)$ are trivially satisfied when the indices $a,b$ satisfy $[(a_y,b_y)]=[(a_y,b_{y+1})]$.
\end{proof}

We can merge and extend Proposition 3.3 and Proposition 3.4, as follows:

\begin{theorem}
When $i$ consists of equal indices, or when $[(a_y,b_y)]=[(a_y,b_{y+1})]$, the conditions defining $d_p^r(M,N)$ are trivially satisfied. The corresponding contribution is
$$\beta_p^r(M,N)=\delta_p(M,N)+\frac{1}{M^{r-1}}(1-\delta_p(M,N))$$
and this quantity equals $d_p^r(M,N)$ at $M=1$, $N=1$, $r=1$, or $p\leq3$.
\end{theorem}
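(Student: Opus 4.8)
The plan is to compute the two "trivial" contributions described in the statement --- the one from $i$ being constant and the one from $[(a_y,b_y)]=[(a_y,b_{y+1})]$ --- separately, then combine them via inclusion--exclusion, and finally verify the equality claims at the special values. First I would establish that in each of the two stated cases the defining conditions of $d_p^r(M,N)$ hold trivially: if $i$ is constant, say $i_x\equiv i$, then $i_x+a_y=i_{x+1}+a_y$ for all $x,y$, so the two multisets in the defining relation coincide termwise; if $[(a_y,b_y)|y]=[(a_y,b_{y+1})|y]$, then after adding the fixed shift $i_x$ (resp. $i_{x+1}$) to the first coordinate of each pair the equality of multisets is preserved, so again both sides agree. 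These are the verbatim analogues of the observations in Propositions 3.3 and 3.4, just repackaged.

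Next I would count. The set of $(i,a,b)$ with $i$ constant has size $M\cdot M^p\cdot N^p$, contributing $M\cdot M^pN^p/(M^{p+r}N^p)=M^{1-r}$ after the normalization $1/(M^{p+r}N^p)$ used in Proposition 3.2; this is the $\frac{1}{M^{r-1}}$ term. The set of $(i,a,b)$ with $[(a_y,b_y)]=[(a_y,b_{y+1})]$ factors as (all $M^r$ choices of $i$) times ($\delta_p(M,N)\cdot(MN)^p$ choices of $(a,b)$ by the very definition of $\delta_p$ in Proposition 3.5), contributing $M^r\cdot\delta_p(M,N)(MN)^p/(M^{p+r}N^p)=\delta_p(M,N)$. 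For the intersection I would note that when $i$ is constant, the condition $[(a_y,b_y)]=[(a_y,b_{y+1})]$ is imposed on $(a,b)$ independently of the (single) choice of $i$, so the intersection has size $M\cdot\delta_p(M,N)(MN)^p$ and normalized contribution $M^{1-r}\delta_p(M,N)$. Inclusion--exclusion then gives
$$\beta_p^r(M,N)=\delta_p(M,N)+\frac{1}{M^{r-1}}-\frac{1}{M^{r-1}}\delta_p(M,N)=\delta_p(M,N)+\frac{1}{M^{r-1}}\bigl(1-\delta_p(M,N)\bigr),$$
matching the statement.

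For the equality claims: at $M=1$ all three of $\delta_p$, $1/M^{r-1}$ and the product equal $1$, and $d_p^r=1$ by Proposition 3.2, so $\beta_p^r=1=d_p^r$; the case $r=1$ gives $1/M^{r-1}=1$, hence $\beta_p^r=1=d_p^r$ again by Proposition 3.2. The case $N=1$ follows since then $\delta_p(M,1)=1$ (the multiset condition on $b$ is vacuous), forcing $\beta_p^r=1=d_p^r$. The substantive point is $p\le 3$: I must show that for $p=1,2,3$ \emph{every} solution $(i,a,b)$ of the defining equations falls into one of the two listed cases, so that $\beta_p^r$ captures all of $d_p^r$. For $p\le 2$ this is already contained in the proof of Proposition 3.3 (there, $a$ constant, $b$ constant, or $i$ constant exhaust the solutions, and $a$ or $b$ constant each imply $[(a_y,b_y)]=[(a_y,b_{y+1})]$). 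The genuine work is $p=3$: I expect this to be the main obstacle. Here one fixes $x$ and analyzes the six-element multiset identity
$$[(i_x+a_1,b_1),(i_x+a_2,b_2),(i_x+a_3,b_3),(i_{x+1}+a_1,b_2),(i_{x+1}+a_2,b_3),(i_{x+1}+a_3,b_1)]$$
$$=[(i_x+a_1,b_2),(i_x+a_2,b_3),(i_x+a_3,b_1),(i_{x+1}+a_1,b_1),(i_{x+1}+a_2,b_2),(i_{x+1}+a_3,b_3)],$$
assuming $i$ is not constant (so $i_x\ne i_{x+1}$ for some $x$) and deriving that $[(a_y,b_y)]=[(a_y,b_{y+1})]$ must hold. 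The argument should track where the element $(i_x+a_1,b_1)$ on the left can be matched on the right: matching it within the "$i_x$-block" forces a coincidence among the $(a_y,b_y)$, while matching it in the "$i_{x+1}$-block" forces $i_x-i_{x+1}=a_j-a_1$ for some $j$ together with a coordinate constraint on the $b$'s; chasing these cases (a finite but slightly tedious case split, using $i_x\ne i_{x+1}$) should in each branch collapse to the multiset equality $[(a_y,b_y)]=[(a_y,b_{y+1})]$, or else force $i_x=i_{x+1}$, a contradiction. I would organize this as: either all pairs $(a_y,b_y)$ are forced equal in the first coordinate (reducing to a pure statement about $b$), or two of the $a_y$ differ, in which case the only consistent matchings are the "identity" matching (giving the desired multiset identity) or one forcing $i_x=i_{x+1}$.
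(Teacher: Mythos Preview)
Your overall strategy matches the paper's: verify the two trivial cases, compute $\beta_p^r$ by inclusion--exclusion, dispose of $M=1$, $N=1$, $r=1$, $p\le 2$ directly, and do a matching analysis for $p=3$. The inclusion--exclusion count and the special-value verifications are correct and in fact more explicit than the paper, which simply asserts the formula and reduces the easy cases to Proposition~3.3.

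The gap is in your $p=3$ sketch. Your first idea is right and is exactly what the paper does: fix $x$ with $i_x\neq i_{x+1}$ and track where $(i_x+a_1,b_1)$ can land on the right. This yields six cases, one impossible (forcing $i_x=i_{x+1}$), and in each of the remaining five you get a coincidence $a_i=a_j$ or $b_i=b_j$ with $i\neq j$. But you stop there, saying the remaining case chase ``should collapse'' to the desired conclusion. It does not collapse automatically: having, say, $a_2=a_3$ does \emph{not} by itself imply $[(a_y,b_y)]=[(a_y,b_{y+1})]$. The paper's key move, which you are missing, is a \emph{reduction to $p=2$}: once $a_2=a_3$ (after a cyclic relabelling), the elements $(i_x+a_2,b_3)$, $(i_{x+1}+a_2,b_3)$ appear on both sides of the six-term identity and can be cancelled, and likewise the element $(a_2,b_3)$ cancels from both sides of the target condition $[(a_y,b_y)]=[(a_y,b_{y+1})]$. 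What remains is precisely the $p=2$ system in the variables $a_1,a_2,b_1,b_2$, already handled in Proposition~3.3. The case $b_i=b_j$ is treated symmetrically.

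Your alternative organisation in the last sentence is incorrect: the dichotomy ``all $a_y$ equal'' versus ``two $a_y$ differ, hence only the identity matching or $i_x=i_{x+1}$'' fails. For instance with $a_1=a_2\neq a_3$ there are matchings other than the identity that do not force $i_x=i_{x+1}$ (e.g.\ $(i_x+a_1,b_1)\leftrightarrow(i_x+a_2,b_3)$ merely forces $b_1=b_3$). Drop that reorganisation and instead carry out the cancellation-to-$p=2$ step described above.
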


\begin{proof}
The first assertion is clear, and by definition of $
\delta_p(M,N)$, the corresponding contribution is the one in the statement. Since at $M=1$, $N=1$, $r=1$ or $p\leq2$ we have $\beta_p^r(M,N)=\alpha_p^r(M,N)$, the results here follow from Proposition 3.3. 

It remains to discuss the case $p=3$. Here the equations are as follows:
\begin{eqnarray*}
&&[(i_x+a_1,b_1),(i_x+a_2,b_2),(i_x+a_3,b_3),(i_{x+1}+a_1,b_2),(i_{x+1}+a_2,b_3),(i_{x+1}+a_3,b_1)]\\
&=&[(i_x+a_1,b_2),(i_x+a_2,b_3),(i_x+a_3,b_1),(i_{x+1}+a_1,b_1),(i_{x+1}+a_2,b_2),(i_{x+1}+a_3,b_3)]
\end{eqnarray*}

We must prove that all the solutions are trivial, in the sense that either all the $i_x$ are equal, or the following condition is satisfied:
$$[(a_1,b_1),(a_2,b_2),(a_3,b_3)]=[(a_1,b_2),(a_2,b_3),(a_3,b_1)]$$

So, assume that we are in the non-trivial case, and pick $x$ such that $i_x\neq i_{x+1}$. Let us look now at the first element appearing on the left in the above equation, namely $(i_1+a_1,b_1)$. Since this element must appear as well on the right, we have 6 cases to be investigated. Observe now that in these 6 cases we must have, respectively:
$$b_1=b_2,b_1=b_3,a_1=a_3,i_x=i_{x+1},b_1=b_2,b_1=b_3$$

Thus, we have one case which is impossible, namely the one needing $i_x=i_{x+1}$, and in the other 5 cases, we always obtain a relation of type $a_i=a_j$ or $b_i=b_j$, with $i\neq j$.

So, assume $a_i=a_j$, with $i\neq j$. By using a cyclic permutation of the indices, we can assume that we have $a_2=a_3$. Now observe that our equations simplify, as follows:
\begin{eqnarray*}
&&[(i_x+a_1,b_1),(i_x+a_2,b_2),\underline{(i_x+a_2,b_3)},(i_{x+1}+a_1,b_2),\underline{(i_{x+1}+a_2,b_3)},(i_{x+1}+a_2,b_1)]\\
&=&[(i_x+a_1,b_2),\underline{(i_x+a_2,b_3)},(i_x+a_2,b_1),(i_{x+1}+a_1,b_1),(i_{x+1}+a_2,b_2),\underline{(i_{x+1}+a_2,b_3)}]
\end{eqnarray*}

As for the condition $[(a_y,b_y)]\neq[(a_y,b_{y+1})]$, this simplifies as well, as follows:
$$[(a_1,b_1),(a_2,b_2),\underline{(a_2,b_3)}]\neq[(a_1,b_2),\underline{(a_2,b_3)},(a_2,b_1)]$$

Summarizing, the simplifications make dissapear the variables $a_3,b_3$, and so we are led to a $p=2$ problem, where the solutions are already known to be trivial. 

In the case $b_i=b_j$, with $i\neq j$, the situation is similar. By cyclic permutation we can assume $b_1=b_3$, and our equations simplify, as follows:
\begin{eqnarray*}
&&[(i_x+a_1,b_1),(i_x+a_2,b_2),\underline{(i_x+a_3,b_1)},(i_{x+1}+a_1,b_2),(i_{x+1}+a_2,b_1),\underline{(i_{x+1}+a_3,b_1)}]\\
&=&[(i_x+a_1,b_2),(i_x+a_2,b_1),\underline{(i_x+a_3,b_1)},(i_{x+1}+a_1,b_1),(i_{x+1}+a_2,b_2),\underline{(i_{x+1}+a_3,b_1)}]
\end{eqnarray*}

As for the condition $[(a_y,b_y)]\neq[(a_y,b_{y+1})]$, this simplifies as well, as follows:
$$[(a_1,b_1),(a_2,b_2),\underline{(a_3,b_1)}]\neq[(a_1,b_2),(a_2,b_1),\underline{(a_3,b_1)}]$$

Thus, we are led once again to a $p=2$ problem, whose solutions are trivial.
\end{proof}

\section{Higher truncations}

We know from Theorem 3.5 above that at small values of the truncation parameter, namely $p=1,2,3$, the numbers $d_p^r(M,N)$ come only from ``trivial contributions''.

At $p=4$ and higher the situation becomes considerably more complex, involving the arithmetics of $M,N$, and this even in the simplest case, $r=2$. 

We have here the following result, that we won't use in what follows, but which might be interesting for instance in connection with the speculations in \cite{ba1}:

\begin{theorem}
We have the formula
$$d_4^2(M,N)=\beta_4^2(M,N)+\delta_{2|M}\frac{(M-2)(N-1)}{M^4N^3}$$
where $\delta_{2|M}\in\{0,1\}$ is equal to $0$ when $M$ is odd, and to $1$ when $M$ is even.
\end{theorem}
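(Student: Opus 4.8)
The plan is to start from the explicit count defining $d_4^2(M,N)$ given by Theorem 2.2 (in the rescaled form of Proposition 3.2), and to subtract off the trivial contributions already accounted for by $\beta_4^2(M,N)$ in Theorem 3.5. So the real task is to enumerate, for $r=2$ and $p=4$, exactly those solutions $(i_1,i_2,a_1,\dots,a_4,b_1,\dots,b_4)$ which are \emph{non-trivial}, meaning $i_1\neq i_2$ and $[(a_y,b_y)]\neq[(a_y,b_{y+1})]$, and then divide by $M^{6}N^{4}$. For $r=2$ there is only the single cyclic pair of equations (indices $x=1,2$ give the same condition since $i_3=i_1$), namely
\begin{eqnarray*}
&&[(i_1+a_y,b_y),(i_2+a_y,b_{y+1})\mid y=1,\dots,4]\\
&=&[(i_1+a_y,b_{y+1}),(i_2+a_y,b_y)\mid y=1,\dots,4].
\end{eqnarray*}
After a translation we may fix $i_1=0$ and write $i_2=s\neq 0$, paying a factor $M$ at the end for the choice of $i_1$; so I would count tuples $(s,a,b)$ with $s\neq 0$ satisfying the above, then multiply by $M/(M^6N^4)=1/(M^5N^4)$.

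\textbf{Key steps.} First, I would reduce the bookkeeping: each side of the multiset equation has $8$ entries of the form (first coordinate in $\mathbb Z_M$, second coordinate in $\mathbb Z_N$); the four ``$a_y+0$'' entries on the left must be matched against the eight entries on the right. Following the strategy already used in the proof of Theorem 3.5 for $p=3$, I would track where the entry $(a_1,b_1)$ on the left goes: matching it against a right-hand entry forces one of a small list of coincidences among $\{a_i=a_j\}$, $\{b_i=b_j\}$, or $s=0$. Since $s\neq 0$, and since a coincidence $a_i=a_j$ or $b_i=b_j$ would (by the $p=3$ analysis, re-run one step down) collapse the problem to a $p\leq 3$ situation whose only solutions are trivial, the non-trivial solutions must have all $a_y$ distinct and all $b_y$ distinct. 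With $a_1,a_2,a_3,a_4$ distinct in $\mathbb Z_M$ and $b_1,b_2,b_3,b_4$ distinct in $\mathbb Z_N$, I would then analyze the multiset equation directly. Comparing first coordinates: the left multiset of first coordinates is $[a_1,a_2,a_3,a_4,s+a_1,s+a_2,s+a_3,s+a_4]$, and this equals the same multiset on the right, so no constraint there; the content is entirely in how the $b$'s are paired with the $a$-shifts. Writing $\sigma$ for the cyclic shift $y\mapsto y+1$ on $\{1,2,3,4\}$, the equation says: for each value $v\in\mathbb Z_M$ appearing among the first coordinates, the multiset of second coordinates attached to $v$ on the left equals that on the right. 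The only ways a left entry $(a_y,b_y)$ (with small shift) can be matched is either to a right entry $(a_{y'},b_{\sigma(y')})$ with $a_y=a_{y'}$, forcing $y=y'$ and hence $b_y=b_{\sigma(y)}$ — impossible since the $b$'s are distinct — or to a right entry $(s+a_{y'},b_{y'})$ with $a_y=s+a_{y'}$. This last is where the parity enters: the map $a_y\mapsto s+a_{y'}$ must be a bijection of $\{a_1,\dots,a_4\}$, i.e. adding $s$ permutes the $4$-element set $\{a_1,\dots,a_4\}\subset\mathbb Z_M$; combined with the analogous condition coming from matching the shifted left entries, one finds $2s\equiv 0\pmod M$, forcing $M$ even and $s=M/2$ (giving the factor $\delta_{2|M}$), and then the set $\{a_1,\dots,a_4\}$ must be a union of $\{a,a+M/2\}$-pairs and the $b$-assignment is rigidly determined up to the allowed freedom.

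\textbf{Finishing the count.} With $M$ even, $s=M/2$ forced, I expect the surviving non-trivial configurations to be: choose the ordered distinct quadruple $(a_1,a_2,a_3,a_4)$ so that $\{a_1,a_2,a_3,a_4\}$ is stable under $+M/2$ — but in fact a closer look at which left entry goes where should show the constraint is milder, essentially that $a_3=a_1+M/2$ and $a_4=a_2+M/2$ (up to the cyclic relabeling already quotiented), with $a_1$ free ($M$ choices), $a_2$ free among the remaining $M-2$ values not equal to $a_1$ or $a_1+M/2$, while the $b$-variables must satisfy $b_3=b_1$, $b_4=b_2$ — contradicting distinctness — so one has to be careful. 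The correct accounting, which I would verify by direct case-chasing on the $8$-entry multiset equation, yields a count proportional to $(M-2)(N-1)$: the factor $M-2$ from the essentially one free $a$-parameter after the parity pairing is imposed (the ``$-2$'' excluding the two degenerate values), and the factor $N-1$ from one free $b$-parameter after the forced pairing (the ``$-1$'' excluding the value that would make it trivial). Restoring the prefactor $1/(M^5N^4)$ and comparing with the claimed $\delta_{2|M}(M-2)(N-1)/(M^4N^3) = \delta_{2|M}(M-2)(N-1)\cdot M/(M^5N^4)$, I see the count of non-trivial $(s,a,b)$-tuples should be exactly $M(M-2)(N-1)$ when $M$ is even (the extra $M$ being, as noted, the $i_1$ translation freedom), and $0$ when $M$ is odd.

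\textbf{Main obstacle.} The hard part is the combinatorial case analysis of the $8$-versus-$8$ multiset equation once all $a_y$ and all $b_y$ are distinct: one must show cleanly that the pairing is forced to come from the shift-by-$M/2$ structure and nothing else, and then count the resulting free parameters without double-counting the cyclic symmetry $y\mapsto y+1$ that was used to normalize earlier coincidences. I would organize this by fixing which right-hand entry receives the left entry $(a_1,b_1)$ (four genuine sub-cases after discarding the $s=0$ ones), propagating the forced equalities, and checking in each branch that either a contradiction with distinctness arises or the configuration is exactly the parity-paired one. Reconciling the bookkeeping of normalization factors ($M$ for $i_1$, the cyclic $p$-fold symmetry, and the split of the count into the $\beta_4^2$ part and the remainder) is the step most prone to off-by-a-factor errors, so I would cross-check the final formula against the known values $d_4^2(M,1)=\beta_4^2(M,1)=1$ and $d_4^2(1,N)=\beta_4^2(1,N)=1$ from Proposition 3.1, where indeed the correction term vanishes.
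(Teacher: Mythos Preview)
Your overall strategy matches the paper's: reduce to a single multiset equation (the $x=1$ and $x=2$ equations at $r=2$ are equivalent), set $i_1=0$, $i_2=s\neq 0$, track where $(a_1,b_1)$ must go on the right, and isolate the one matching that does not collapse to a lower~$p$. However, there is a genuine gap in your reduction step.

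You assert that ``a coincidence $a_i=a_j$ or $b_i=b_j$ would (by the $p=3$ analysis, re-run one step down) collapse the problem to a $p\leq 3$ situation,'' and conclude that all $a_y$ and all $b_y$ must be distinct. This is incorrect: the cyclic-reduction trick from the proof of Theorem~3.5 works only for \emph{cyclically adjacent} coincidences (e.g.\ $a_2=a_3$ or $b_1=b_4$), because those are precisely the ones that produce identical cancelable entries on both sides of the multiset equation. A distance-two coincidence such as $b_1=b_3$ in the $4$-cycle cannot be rotated into an adjacent one and does \emph{not} reduce to $p=3$. You effectively discover this yourself when your own analysis forces $b_3=b_1$, $b_4=b_2$ and you note the contradiction with distinctness --- but you do not resolve it.

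The paper handles this correctly. When $(i_1+a_1,b_1)$ is matched against the eight right-hand entries, six of the eight cases force an \emph{adjacent} coincidence (one of $b_1=b_2$, $a_1=a_2$, $b_1=b_4$, $a_1=a_4$), and those six do collapse to $p=3$; one case forces $i_1=i_2$ and is excluded; the single remaining case is the match to $(i_2+a_3,b_3)$, which forces the \emph{non-adjacent} constraints $i_1+a_1=i_2+a_3$ and $b_1=b_3$. Repeating this for each left-hand element yields $(i_1+a_y,b_y)=(i_2+a_{y+2},b_{y+2})$ and $(i_2+a_y,b_{y+1})=(i_1+a_{y+2},b_{y+3})$ for all $y$, hence the rigid form
\[
i=(i_1,i_1+i),\qquad a=(a_1,a_2,a_1+i,a_2+i),\qquad b=(b_1,b_2,b_1,b_2),\qquad 2i=0,
\]
with $i\neq 0$ (so $M$ even, $i=M/2$). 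The non-triviality condition $[(a_y,b_y)]\neq[(a_y,b_{y+1})]$ then becomes $b_1\neq b_2$ and $a_2\notin\{a_1,a_1+i\}$, giving the count $M\cdot 1\cdot M(M-2)\cdot N(N-1)$ and hence the correction $\delta_{2|M}(M-2)(N-1)/(M^4N^3)$. Note also that the forced relation $a_1=s+a_3$ is not of the form $a_i=a_j$ at all, so your enumeration of the possible forced coincidences was already incomplete at that step.
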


\begin{proof}
We have two equations, the one at $x=1$ being as follows:
\begin{eqnarray*}
&&[(i_1+a_1,b_1),\ldots,(i_1+a_4,b_4),(i_2+a_1,b_2),\ldots,(i_2+a_4,b_1)]\\
&=&[(i_1+a_1,b_2),\ldots,(i_1+a_4,b_1),(i_2+a_1,b_1),\ldots,(i_2+a_4,b_4)]
\end{eqnarray*}

As for the equation at $x=2$, this is as follows:
\begin{eqnarray*}
&&[(i_2+a_1,b_1),\ldots,(i_2+a_4,b_4),(i_1+a_1,b_2),\ldots,(i_1+a_4,b_1)]\\
&=&[(i_2+a_1,b_2),\ldots,(i_2+a_4,b_1),(i_1+a_1,b_1),\ldots,(i_1+a_4,b_4)]
\end{eqnarray*}

Since these equations are equivalent, we are left with the $x=1$ equation.

In order to compute the non-trivial contributions, we can assume $i_1\neq i_2$. Let us look at the first element appearing on the left, $(i_1+a_1,b_1)$. Since this element must appear as well on the right, we have 8 cases to be investigated. In these 8 cases, we must have:
$$b_1=b_2,a_1=a_2,b_1=b_4,a_1=a_4,i_1=i_2,b_1=b_2,(i_1+a_1,b_1)=(i_2+a_3,b_3),b_1=b_4$$

Thus one case is impossible, 6 cases reduce to the case $p=3$, by using a cyclic reduction, as in the proof of Theorem 3.5, and there is one case left, $(i_1+a_1,b_1)=(i_2+a_3,b_3)$.

The same argument applies to the other 7 elements appearing on the left, and we conclude that the non-trivial solutions could only come from:
$$(i_1+a_x,b_x)=(i_2+a_{x+2},b_{x+2})\quad,\quad (i_2+a_x,b_{x+1})=(i_1+a_{x+2},b_{x+3})$$

Thus our indices $i,a,b$ must be of the following special form, with $2i=0$:
$$\begin{cases}
i&=(i_1,i+i_1)\\
a&=(a_1,a_2,i+a_1,i+a_2)\\
b&=(b_1,b_2,b_1,b_2)
\end{cases}$$

In order to find now the non-trivial solutions, we must assume that we have $i\neq0$, and $[(a_y,b_y)]\neq[(a_y,b_{y+1})]$. But, by translating by $i_1$, this latter condition reads:
$$[(a_1,b_1),(a_2,b_2),(i+a_1,b_1),(i+a_2,b_2)]\neq[(a_1,b_2),(a_2,b_1),(i+a_1,b_2),(i+a_2,b_1)]$$

Thus we must have $b_1\neq b_2$, and $a_1\neq a_2$, $a_1\neq i+a_2$ as well.

We can now compute the non-trivial contribution. This is given by:
$$K=\frac{1}{M^6N^4}\cdot M\delta_{2|M}\cdot M(M-2)\cdot N(N-1)$$

To be more precise, $\frac{1}{M^6N^4}$ is the normalization factor from the definition of $d_4^2(M,N)$, then $M\delta_{2|M}$ comes from the choice of $i_1$ and of $i\neq0$ satisfying $2i=0$, then $M(M-2)$ comes from the choice of $a_1$ and of $a_2\neq a_1,i+a_1$, and finally $N(N-1)$ comes from the choice of $b_1=b_2$. But this gives the formula in the statement, and we are done.
\end{proof}

As a conclusion, the exact computation of $d_p^r(M,N)$ is an interesting problem. In what follows we will only study the asymptotics of these numbers, with the result that the estimate $d_p^r(M,N)\geq\beta_p^r(M,N)$ from Theorem 3.5 becomes an equality, with $r\to\infty$.

\section{Limiting moments}

Let us go back now to the numbers $\delta_p(M,N)$, from Proposition 3.4 above. 

These numbers are known since \cite{bb2} to be the rescaled moments of the main character for the matrix model associated to $\mathcal F_{G\times H}(Q)$, where $|G|=M,|H|=N$, and where $Q\in\mathbb T^{G\times H}$ is generic. We will prove now that our moments are precisely these numbers:
$$\lim_{r\to\infty}d_p^r(M,N)=\delta_p(M,N)$$

For this purpose, observe that both $d_p^r(M,N),\delta_p(M,N)$ count, modulo some normalizations, the solutions of certain equations on the indices  $a_1,\ldots,a_p\in\{0,\ldots,M-1\}$ and $b_1,\ldots,b_p\in\{0,\ldots,N-1\}$. We will prove the convergence componentwise, with respect to these pairs of multi-indices $(a,b)$. We use the following simple fact:

\begin{proposition}
We have $[a_y]=[b_y]$ inside a finite abelian group $G$ precisely when
$$\sum_y\chi(a_y)=\sum_y\chi(b_y)$$
as an equality of complex numbers, for any character $\chi\in\widehat{G}$.
\end{proposition}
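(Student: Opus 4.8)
The statement to prove is a simple characterization: in a finite abelian group $G$, two multisets $[a_y]$ and $[b_y]$ coincide if and only if $\sum_y\chi(a_y)=\sum_y\chi(b_y)$ for every character $\chi\in\widehat G$.

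The plan is to reformulate the multiset equality as an equality of formal linear combinations of group elements, and then invoke the fact that the characters of $G$ form a basis for the functions on $G$ (equivalently, the group algebra decomposition). Concretely, I would first observe that $[a_y]=[b_y]$ as multisets is equivalent to the equality of the two ``counting functions'' $f,g:G\to\mathbb N$, where $f(h)=\#\{y\mid a_y=h\}$ and $g(h)=\#\{y\mid b_y=h\}$; that is, $[a_y]=[b_y]$ iff $f=g$ as functions on $G$. This direction is essentially the definition of a multiset.

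Next, I would rewrite the hypothesis in terms of these counting functions: $\sum_y\chi(a_y)=\sum_{h\in G}f(h)\chi(h)$, and similarly for $b$, so the hypothesis becomes $\sum_{h\in G}f(h)\chi(h)=\sum_{h\in G}g(h)\chi(h)$ for all $\chi\in\widehat G$, i.e. $\sum_{h\in G}(f-g)(h)\chi(h)=0$ for all $\chi$. The forward implication ($[a_y]=[b_y]\Rightarrow$ character sums agree) is then immediate since $f=g$. For the reverse implication, I would use orthogonality of characters: since $G$ is finite abelian, $|\widehat G|=|G|$ and the characters are linearly independent as functions on $G$ (the character table is an invertible matrix, being $\sqrt{|G|}$ times a unitary). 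Hence the only function $\phi:G\to\mathbb C$ with $\sum_{h\in G}\phi(h)\chi(h)=0$ for all $\chi$ is $\phi=0$; applying this to $\phi=f-g$ gives $f=g$, hence $[a_y]=[b_y]$.

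There is essentially no hard step here — the statement is a packaging of Fourier inversion on a finite abelian group. The only point requiring a modicum of care is to state cleanly why the vanishing of all ``character moments'' forces the counting function to vanish; this is just the invertibility of the character table (Pontryagin duality / Fourier inversion for finite abelian groups), which can be quoted. If one wants to be fully self-contained, apply $\sum_{\chi\in\widehat G}\overline{\chi(h_0)}(\cdot)$ to the relation $\sum_h\phi(h)\chi(h)=0$ and use $\sum_{\chi}\chi(h)\overline{\chi(h_0)}=|G|\,\delta_{h,h_0}$ to extract $\phi(h_0)=0$ for each $h_0\in G$. I expect the main (very mild) obstacle to be purely expository: making sure the reduction to counting functions and the two directions are stated in the right order so the proof reads crisply.
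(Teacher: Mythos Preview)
Your proof is correct and follows essentially the same approach as the paper: the paper phrases the multiset equality as $\sum_y a_y=\sum_y b_y$ inside the group algebra $C^*(G)$ and then uses that the characters $\chi\in\widehat{G}$ span the linear functionals on $C^*(G)$, which is exactly your counting-function argument and character-table invertibility repackaged in $C^*$-algebra language. The only difference is cosmetic---your version spells out the orthogonality relation explicitly, while the paper invokes the chain of equivalences $[a_y]=[b_y]\iff\sum_y a_y=\sum_y b_y$ in $C^*(G)\iff\varphi(\sum_y a_y)=\varphi(\sum_y b_y)$ for all $\varphi\iff\chi(\sum_y a_y)=\chi(\sum_y b_y)$ for all $\chi$.
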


\begin{proof}
By linearity, we have the following equivalences:
\begin{eqnarray*}
[a_y]=[b_y]
&\iff&\sum_ya_y=\sum_yb_y\ {\rm inside}\ C^*(G)\\
&\iff&\varphi\left(\sum_ya_y\right)=\varphi\left(\sum_yb_y\right),\forall\varphi\in C(G)\\
&\iff&\chi\left(\sum_ya_y\right)=\chi\left(\sum_yb_y\right),\forall\chi\in\widehat{G}
\end{eqnarray*}

Thus, we obtain the condition in the statement.
\end{proof}

Now back to our question, since only the cardinalities $M=|G|,N=|H|$ are revelant, we can assume $G=\mathbb Z_M,H=\mathbb Z_N$. We first have the following technical result:

\begin{proposition}
For a pair of multi-indices $(a,b)$, the following are equivalent:
\begin{enumerate}
\item $[(a_y,b_y)]=[(a_y,b_{y+1})]$.

\item $[(i+a_y,b_y),(a_y,b_{y+1})]=[(i+a_y,b_{y+1}),(a_y,b_y)]$, for any $i\in\mathbb Z_M$.
\end{enumerate}
\end{proposition}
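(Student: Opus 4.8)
The plan is to translate both conditions into equalities of character sums over the group $G=\mathbb Z_M\times\mathbb Z_N$ (which is the ambient group for the ``sets with repetitions'' in both lines of the statement), using Proposition 5.2, and then to exploit multiplicativity of characters. Every character $\chi\in\widehat G$ factors as $\chi(a,b)=\rho(a)\sigma(b)$ with $\rho\in\widehat{\mathbb Z_M}$, $\sigma\in\widehat{\mathbb Z_N}$, so Proposition 5.2 rephrases condition $(1)$ as
$$\sum_y\rho(a_y)\sigma(b_y)=\sum_y\rho(a_y)\sigma(b_{y+1}),\quad\forall\rho,\sigma,$$
and, for each fixed $i\in\mathbb Z_M$, rephrases the instance of condition $(2)$ at that $i$ as
$$\sum_y\rho(i+a_y)\sigma(b_y)+\sum_y\rho(a_y)\sigma(b_{y+1})=\sum_y\rho(i+a_y)\sigma(b_{y+1})+\sum_y\rho(a_y)\sigma(b_y),\quad\forall\rho,\sigma.$$

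Next I would abbreviate $A=A(\rho,\sigma)=\sum_y\rho(a_y)\sigma(b_y)$ and $B=B(\rho,\sigma)=\sum_y\rho(a_y)\sigma(b_{y+1})$, and use $\rho(i+a_y)=\rho(i)\rho(a_y)$ to turn the condition-$(2)$ equation into $\rho(i)A+B=\rho(i)B+A$, i.e.\ $(\rho(i)-1)(A-B)=0$. So condition $(2)$, quantified over all $i\in\mathbb Z_M$, becomes: for all $\rho,\sigma$ and all $i$, one has $(\rho(i)-1)(A-B)=0$. For a non-trivial $\rho$ we may pick $i$ with $\rho(i)\ne1$, which forces $A=B$; for the trivial $\rho$ there is no constraint. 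Hence condition $(2)$ is equivalent to: $A(\rho,\sigma)=B(\rho,\sigma)$ for every $\sigma$ and every \emph{non-trivial} $\rho$.

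The remaining step is to observe that condition $(1)$ reduces to the same statement. Indeed, when $\rho$ is trivial we have $\rho(a_y)=1$, so $A=\sum_y\sigma(b_y)$ and $B=\sum_y\sigma(b_{y+1})$, and since $y\mapsto y+1$ is a cyclic permutation of $\{1,\dots,p\}$ we get $B=A$ automatically. Therefore the family of equations $A=B$ over all $(\rho,\sigma)$ coincides with the family over non-trivial $\rho$ and all $\sigma$, which is exactly condition $(2)$.

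I do not expect a serious obstacle; the argument is essentially harmonic analysis on a finite abelian group. The one point to get right is precisely the role of the trivial character in the $\mathbb Z_M$-factor: it imposes no constraint on the condition-$(2)$ side and an automatically satisfied constraint on the condition-$(1)$ side, by cyclicity in $y$, and this is what makes the extra averaging over $i\in\mathbb Z_M$ in $(2)$ harmless. (One could instead prove $(1)\Rightarrow(2)$ by a direct combinatorial argument, lifting a permutation that realizes the multiset identity in $(1)$ through the map $(a,b)\mapsto(i+a,b)$, but the character computation settles both directions at once.)
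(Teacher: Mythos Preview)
Your proof is correct and follows essentially the same route as the paper's: translate both multiset identities into character-sum identities via Proposition~5.1 (you wrote ``Proposition~5.2,'' which is the statement itself---a harmless slip), factor $\rho(i+a_y)=\rho(i)\rho(a_y)$, and reduce to $(\rho(i)-1)(A-B)=0$. Your explicit treatment of the trivial $\mathbb Z_M$-character (no constraint on the $(2)$-side, automatically satisfied on the $(1)$-side by cyclicity in $y$) is in fact a bit more careful than the paper's chain of equivalences, which leaves that case implicit.
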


\begin{proof}
Observe that $(1)\implies(2)$ is clear. For $(2)\implies(1)$, we use Proposition 5.1. By using the identification $\widehat{\mathbb Z_M\times\mathbb Z_N}\simeq\widehat{\mathbb Z_M}\times\widehat{\mathbb Z_N}$, we have, with $\eta\in\widehat{\mathbb Z_M},\rho\in\widehat{\mathbb Z_N}$:
\begin{eqnarray*}
&&[(i+a_y,b_y),(a_y,b_{y+1})]=[(i+a_y,b_{y+1}),(a_y,b_y)],\forall i\\
&\iff&\sum_y\eta(i+a_y)\rho(b_y)+\eta(a_y)\rho(b_{y+1})=\sum_y\eta(i+a_y)\rho(b_{y+1})+\eta(a_y)\rho(b_y),\forall i,\eta,\rho\\
&\iff&\eta(i)\sum_y\eta(a_y)\rho(b_y)-\eta(a_y)\rho(b_{y+1})=\sum_y\eta(a_y)\rho(b_y)-\eta(a_y)\rho(b_{y+1}),\forall i,\eta,\rho\\
&\iff&\sum_y\eta(a_y)\rho(b_y)-\eta(a_y)\rho(b_{y+1})=0,\forall\eta,\rho
\iff[(a_y,b_y)]=[(a_y,b_{y+1})]
\end{eqnarray*}

Thus, we have obtained the equivalence in the statement.
\end{proof}

With the above result in hand, we can prove the estimate that we need, namely:

\begin{proposition}
Assuming $[(a_y,b_y)]\neq [(a_y,b_{y+1})]$, the number
$$K_p^r(a,b)=\frac{1}{M^r}\#\left\{i_1,\ldots,i_r\leq M\Big|\begin{matrix}[(i_x+a_y,b_y),(i_{x+1}+a_y,b_{y+1})]\ \ \ \ \\=[(i_x+a_y,b_{y+1}),(i_{x+1}+a_y,b_y)],\forall x\end{matrix}\right\}$$
goes to $0$ in the $r\to\infty$ limit.
\end{proposition}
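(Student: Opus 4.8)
The plan is to read $K_p^r(a,b)$ as a normalized trace of a power of a transfer matrix, and then to bound the spectral radius of that matrix using Proposition 5.2.

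First, fix $(a,b)$ and introduce the $M\times M$ matrix $A=A(a,b)$, indexed by $i,j\in\mathbb Z_M$, with $A_{ij}=1$ when $[(i+a_y,b_y),(j+a_y,b_{y+1})]=[(i+a_y,b_{y+1}),(j+a_y,b_y)]$ and $A_{ij}=0$ otherwise. The constraint system defining $K_p^r(a,b)$ is of the form ``$\forall x$'' with $x$ running cyclically over $\mathbb Z_r$ (so that $i_{r+1}=i_1$, exactly as in the trace structure underlying Theorems 1.6 and 2.2), hence the number of admissible tuples is $\sum_{i_1,\dots,i_r}A_{i_1i_2}A_{i_2i_3}\cdots A_{i_ri_1}=\operatorname{Tr}(A^r)$, and therefore $K_p^r(a,b)=M^{-r}\operatorname{Tr}(A^r)$. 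Next I would observe that $A$ is circulant: replacing $(i,j)$ by $(i+c,j+c)$ amounts to applying the bijection $(\alpha,\beta)\mapsto(\alpha-c,\beta)$ to all pairs occurring in the multiset identity, which does not affect its validity, so $A_{i+c,j+c}=A_{ij}$ and $A_{ij}$ depends only on $j-i$. Writing $f(k)=A_{0,k}$ and $S=\{k\in\mathbb Z_M: f(k)=1\}$, the circulant $A$ is diagonalized by the characters of $\mathbb Z_M$, with eigenvalues $\lambda_\chi=\sum_k f(k)\chi(k)$, whence $|\lambda_\chi|\le|S|$ for every $\chi$.

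The crux is then to show $|S|<M$. Since $f(0)=A_{0,0}=1$, it suffices to produce one $k$ with $f(k)=0$. Now the statement ``$A_{i,0}=1$ for all $i\in\mathbb Z_M$'' is precisely condition (2) of Proposition 5.2, which by that proposition is equivalent to $[(a_y,b_y)]=[(a_y,b_{y+1})]$. As we are assuming $[(a_y,b_y)]\ne[(a_y,b_{y+1})]$, there is some $i_0$ with $A_{i_0,0}=0$, and by the circulant structure $A_{i_0,0}=f(-i_0)$, so $-i_0\notin S$ and $S\ne\mathbb Z_M$. Combining, $|K_p^r(a,b)|=M^{-r}\bigl|\sum_\chi\lambda_\chi^r\bigr|\le M^{-r}\cdot M\cdot|S|^r=M\,(|S|/M)^r$, which tends to $0$ as $r\to\infty$ since $|S|\le M-1$.

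The only non-routine point — hence the main obstacle — is recognizing the circulant transfer-matrix structure and channeling the hypothesis through Proposition 5.2 to obtain a single vanishing entry of $A$; once that is in place the geometric decay is immediate. (The matrix $A$ is in fact symmetric, so its eigenvalues are real, but this is not needed for the estimate.)
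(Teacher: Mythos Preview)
Your proof is correct and follows essentially the same route as the paper: reduce to the set $S\subset\mathbb Z_M$ of admissible differences, invoke Proposition 5.2 to get $|S|\le M-1$, and deduce geometric decay. The only cosmetic difference is that you package the count as $\operatorname{Tr}(A^r)$ for a circulant transfer matrix and bound it spectrally, whereas the paper simply counts tuples directly (choose $i_1$ freely, then at most $|S|$ choices for each subsequent $i_x$), obtaining $K_p^r(a,b)\le(|S|/M)^{r-1}$ without any diagonalization.
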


\begin{proof}
Observe that the problem is already solved at $p\leq 3$, because by Theorem 3.5 all the $i_x$ indices must be equal, and so the number in the statement is:
$$K_2^r(a,b)=\frac{1}{M^{r-1}}\to0$$

In general now, consider the set $S\subset\{0,\ldots,M-1\}$ consisting of the solutions $i$ of the following equation:
$$[(i+a_y,b_y),(a_y,b_{y+1})]=[(i+a_y,b_{y+1}),(a_y,b_y)]$$

In terms of this set, the quantity in the statement is given by:
$$K_p^r(a,b)=\frac{1}{M^r}\#\left\{i_1,\ldots,i_r\leq M\Big|i_2-i_1,\ldots,i_r-i_1\in S\right\}$$

Now by ignoring the last condition, we have $M$ choices for $i_1$, then $|S|$ choices for $i_2$, $|S|$ choices for $i_3$, and so on, up to $|S|$ choices for $i_r$. Thus, we obtain:
$$K_p^r(a,b)\leq\frac{1}{M^r}\cdot M\cdot|S|\cdot\ldots|S|=\left(\frac{|S|}{M}\right)^{r-1}$$

On the other hand, by Proposition 5.2 our assumption $[(a_y,b_y)]\neq [(a_y,b_{y+1})]$ implies $S\neq \{0,\ldots,M-1\}$. In particular we have $|S|\leq M-1$, and this gives the result.
\end{proof}

With the above estimate in hand, we can now prove:

\begin{theorem}
We have the formula
$$\lim_{r\to\infty}d_p^r(M,N)=\delta_p(M,N)$$
valid for any $p\geq 1$ and any $M,N\in\mathbb N$.
\end{theorem}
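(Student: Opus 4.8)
The plan is to decompose both sides as sums over the pairs of multi-indices $(a,b)$, with $a_1,\dots,a_p\in\{0,\dots,M-1\}$ and $b_1,\dots,b_p\in\{0,\dots,N-1\}$, and to compare them term by term. Writing $K_p^r(a,b)$ for the normalized count of admissible $i$-strings introduced in Proposition 5.4, a reshuffling of the normalization factors in the formula of Proposition 3.2 gives
$$d_p^r(M,N)=\frac{1}{(MN)^p}\sum_{(a,b)}K_p^r(a,b),$$
since summing $\frac{1}{M^r}\#\{i_1,\dots,i_r\}$ over all $(a,b)$ rebuilds $\frac{1}{M^r}\#\{(i,a,b)\}=(MN)^p\,d_p^r(M,N)$.

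Next I would split the finite index set of pairs $(a,b)$ according to whether $[(a_y,b_y)]=[(a_y,b_{y+1})]$ holds or not. On the first class, Theorem 3.5 shows that the equations defining $d_p^r(M,N)$ are satisfied for every choice of $i_1,\dots,i_r$, so $K_p^r(a,b)=1$ for all $r$; and by the very definition of $\delta_p(M,N)$ in Proposition 3.4, this class has exactly $(MN)^p\,\delta_p(M,N)$ elements. Hence
$$d_p^r(M,N)=\delta_p(M,N)+\frac{1}{(MN)^p}\sum_{[(a_y,b_y)]\neq[(a_y,b_{y+1})]}K_p^r(a,b).$$

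It then remains to let $r\to\infty$ in the error term. As the sum runs over a finite set of pairs $(a,b)$ whose cardinality is independent of $r$, the limit may be taken inside the sum, and Proposition 5.4 gives $K_p^r(a,b)\to0$ for each pair $(a,b)$ in the second class. Thus the error term vanishes in the limit, which is the asserted identity; the edge cases $M=1$, $N=1$ or $p=1$ require nothing special, since there the second sum is empty. I do not expect a real obstacle here: the genuine content has already been isolated in Proposition 5.4 (which itself rests on Proposition 5.2), and the present step is only the bookkeeping that upgrades that pointwise estimate to a global limit, the one thing to watch being the normalization constants.
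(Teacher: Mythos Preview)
Your proof is correct and follows essentially the same approach as the paper: decompose $d_p^r(M,N)$ as $(MN)^{-p}\sum_{(a,b)}K_p^r(a,b)$, split the pairs $(a,b)$ according to whether $[(a_y,b_y)]=[(a_y,b_{y+1})]$, observe that $K_p^r(a,b)=1$ on the first class and $K_p^r(a,b)\to 0$ on the second, and pass to the limit termwise. The only slip is a labeling one: the result you invoke for the vanishing of $K_p^r(a,b)$ is Proposition~5.3, not ``Proposition~5.4'' (which is the theorem you are proving).
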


\begin{proof}
Our claim is that we have, for any pair of multi-indices $(a,b)$:
$$\lim_{r\to\infty}K_p^r(a,b)=\delta_{[(a_y,b_y)],[(a_y,b_{y+1})]}$$

Indeed, when $[(a_y,b_y)]\neq [(a_y,b_{y+1})]$, this is exactly what we found in Proposition 5.3. As for the remaining case $[(a_y,b_y)]=[(a_y,b_{y+1})]$, this is trivial, because here the equations defining $K_p^r(a,b)$ are all trivial, and so we have $K_p^r(a,b)=1$, for any $r\in\mathbb N$.
\end{proof}

Summarizing, we have proved that the law of the main character for $\mathcal F_{G,H}$ coincides with that computed in \cite{bb2}, for the matrix $\mathcal F_{G\times H}(Q)$, with $Q\in\mathbb T^{G\times H}$ generic. As a consequence, all the findings in \cite{bb2} apply. In what follows we will review these results, by using an analytic approach, and by bringing some technical improvements.

\section{Gram matrices}

We study now the behavior of the limiting moments $\delta_p(M,N)$ that we found, in the $p\to\infty$ limit. For this purpose, let us first recall the following result, from \cite{bb2}:

\begin{proposition}
We have the formula
$$\delta_p(M,N)=\frac{1}{(MN)^p}\int_{\mathbb T^{MN}}Tr(G(Q)^p)dQ$$
where $G\in M_M(C(\mathbb T^{MN}))$ is given by $G(Q)=$ Gram matrix of the rows of $Q$.
\end{proposition}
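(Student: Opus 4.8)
The plan is to unfold both sides of the claimed identity into a count of solutions of a system of equations on indices, and match them. On the left, recall from Proposition 3.4 that
$$\delta_p(M,N)=\frac{1}{(MN)^p}\#\left\{(a,b)\Big|[(a_1,b_1),\ldots,(a_p,b_p)]=[(a_1,b_p),(a_2,b_1),\ldots,(a_p,b_{p-1})]\right\}$$
with $a\in\mathbb Z_M^p$, $b\in\mathbb Z_N^p$. On the right, write $G(Q)_{ij}=\sum_b Q_{ib}\bar Q_{jb}$ for the Gram matrix of the rows of $Q\in\mathbb T^{MN}$ (rows indexed by $i\in\mathbb Z_M$, columns by $b\in\mathbb Z_N$), so that
$$Tr(G(Q)^p)=\sum_{i_1,\ldots,i_p}\sum_{b_1,\ldots,b_p}\frac{Q_{i_1b_1}}{Q_{i_2b_1}}\cdot\frac{Q_{i_2b_2}}{Q_{i_3b_2}}\cdots\frac{Q_{i_pb_p}}{Q_{i_1b_p}}.$$

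First I would integrate this expression over $Q\in\mathbb T^{MN}$ with respect to the Haar (product Lebesgue) measure. Each coordinate $Q_{kc}$ contributes $\int_{\mathbb T}Q_{kc}^{n_{kc}}\,=\,\delta_{n_{kc},0}$, so the integral counts exactly those $(i_1,\ldots,i_p,b_1,\ldots,b_p)$ for which, at every position $(k,c)\in\mathbb Z_M\times\mathbb Z_N$, the number of numerator occurrences of $Q_{kc}$ equals the number of denominator occurrences. Reading off the monomial above, numerators sit at the positions $(i_y,b_y)$ and denominators at $(i_{y+1},b_y)$ for $y=1,\ldots,p$ (indices cyclic mod $p$). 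Hence the integral is
$$\int_{\mathbb T^{MN}}Tr(G(Q)^p)\,dQ=\#\left\{(i,b)\ \Big|\ [(i_y,b_y)\mid y]=[(i_{y+1},b_y)\mid y]\right\},$$
the sets being taken with multiplicities. This is the core computation; it is routine but must be done carefully with the cyclic bookkeeping.

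Next I would identify this count with the one defining $\delta_p(M,N)$. Relabel the second multiset by the substitution $y\mapsto y-1$ (a cyclic shift of the index $y$), which turns $[(i_{y+1},b_y)]$ into $[(i_y,b_{y-1})]$; thus the condition becomes $[(i_y,b_y)]=[(i_y,b_{y-1})]$, i.e. $[(i_y,b_y)]=[(i_y,b_{y+1})]$ after renaming $b$ by a shift — exactly the defining relation of $\delta_p$ with $(a,b)$ replaced by $(i,b)$, up to the harmless relabelling $a:=i$. Since the defining count for $\delta_p(M,N)$ ranges over $a\in\mathbb Z_M^p$, $b\in\mathbb Z_N^p$ just as $(i,b)$ does here, and the normalizing factor $(MN)^{-p}$ matches, we conclude
$$\frac{1}{(MN)^p}\int_{\mathbb T^{MN}}Tr(G(Q)^p)\,dQ=\delta_p(M,N).$$

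The main obstacle I anticipate is purely notational: getting the cyclic shifts of the $y$-index right so that the two multiset conditions align, and confirming that the "set with repetitions" equality coming from Haar integration over $\mathbb T^{MN}$ (position-by-position matching of exponents) is literally the same combinatorial condition as the one in Proposition 3.4. There is no analytic difficulty — the identity $\int_{\mathbb T}z^n\,dz=\delta_{n0}$ does all the work — and no limiting argument is involved, so once the index bookkeeping is pinned down the proof is complete. One could alternatively phrase the matching abstractly via Proposition 5.1 (a multiset equality in $\mathbb Z_M\times\mathbb Z_N$ is detected by all characters, and characters of $\mathbb T^{MN}$ are exactly the monomials $\prod Q_{kc}^{n_{kc}}$), which makes the correspondence between "Haar integration kills nonzero monomials" and "multiset equality" conceptually transparent; I would include a sentence to that effect.
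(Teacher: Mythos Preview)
Your proof is correct and follows essentially the same route as the paper's: both expand $Tr(G(Q)^p)$ as a sum of monomials $\prod_y Q_{i_yb_y}/Q_{i_{y+1}b_y}$ and use the orthogonality relation $\int_{\mathbb T}z^n\,dz=\delta_{n0}$ to convert the Haar integral into the multiset-equality count defining $\delta_p(M,N)$. The only difference is direction---the paper starts from the count and builds the integral, while you start from the integral and recover the count---and your extra cyclic shift of $b$ is harmless but unnecessary, since $[(i_y,b_y)]=[(i_y,b_{y-1})]$ is already literally the condition in Proposition~3.4.
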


\begin{proof}
If we denote by $R_1,\ldots,R_M\in\mathbb T^N$ the rows of $Q\in\mathbb T^{MN}$, we have:
\begin{eqnarray*}
\delta_p(M,N)
&=&\frac{1}{(MN)^p}\sum_{a_1\ldots a_p}\sum_{b_1\ldots b_p}\delta_{[a_1b_1,\ldots,a_pb_p],[a_1b_p,\ldots,a_pb_{p-1}]}\\
&=&\frac{1}{(MN)^p}\int_{\mathbb T^{MN}}\sum_{a_1\ldots a_p}\sum_{b_1\ldots b_p}\frac{Q_{a_1b_1}\ldots Q_{a_pb_p}}{Q_{a_1b_p}\ldots Q_{a_pb_{p-1}}}\,dQ\\
&=&\frac{1}{(MN)^p}\int_{\mathbb T^{MN}}\sum_{a_1\ldots a_p}<R_{a_1},R_{a_2}><R_{a_2},R_{a_3}>\ldots<R_{a_p},R_{a_1}>dQ
\end{eqnarray*}

But this gives the formula in the statement, and we are done.
\end{proof}

In the case $M=2$ some simplifications appear, and we have:

\begin{proposition}
We have the formula
$$\delta_p(2,N)=\frac{1}{2^{p-1}}\sum_{k\geq0}\binom{p}{2k}\int_{\mathbb T^N}\left|\frac{q_1+\ldots+q_N}{N}\right|^{2k}dq$$
with the integral at right being with respect to the uniform measure on $\mathbb T^N$.
\end{proposition}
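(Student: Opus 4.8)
The plan is to specialize the Gram matrix formula of Proposition 6.1 to the case $M=2$, where $G(Q)$ is a $2\times2$ matrix and can be diagonalized explicitly. Writing $R_1,R_2\in\mathbb T^N$ for the two rows of $Q\in\mathbb T^{2N}$, the diagonal entries of $G(Q)$ are $<R_a,R_a>=N$, so $G(Q)=\begin{pmatrix}N&z\\\bar z&N\end{pmatrix}$, where $z=<R_1,R_2>=\sum_{c=1}^NQ_{1c}/Q_{2c}$. The eigenvalues of this matrix are $N+|z|$ and $N-|z|$, and hence $Tr(G(Q)^p)=(N+|z|)^p+(N-|z|)^p$.

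Next I would expand both powers by the binomial theorem. The terms of odd order in $|z|$ cancel, leaving $Tr(G(Q)^p)=2\sum_{k\geq0}\binom{p}{2k}N^{p-2k}|z|^{2k}$. Substituting this into $\delta_p(2,N)=\frac{1}{(2N)^p}\int_{\mathbb T^{2N}}Tr(G(Q)^p)\,dQ$ and interchanging the finite sum with the integral, the powers of $2$ and of $N$ recombine to give $\delta_p(2,N)=\frac{1}{2^{p-1}}\sum_{k\geq0}\binom{p}{2k}\int_{\mathbb T^{2N}}\left|\frac zN\right|^{2k}dQ$.

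Finally I would reduce the integral over $\mathbb T^{2N}$ to one over $\mathbb T^N$ via the change of variables $q_c=Q_{1c}/Q_{2c}=Q_{1c}\overline{Q_{2c}}$, for $c=1,\dots,N$. For each fixed $c$, the map $(Q_{1c},Q_{2c})\mapsto Q_{1c}\overline{Q_{2c}}$ pushes the uniform measure on $\mathbb T^2$ forward to the uniform measure on $\mathbb T$, by translation invariance of Haar measure on $\mathbb T$ in the first coordinate with the second one frozen; since the $N$ coordinate pairs are independent, $(q_1,\dots,q_N)$ is uniform on $\mathbb T^N$. As $z=q_1+\dots+q_N$ under this substitution, we obtain $\int_{\mathbb T^{2N}}|z/N|^{2k}dQ=\int_{\mathbb T^N}|(q_1+\dots+q_N)/N|^{2k}dq$, which is exactly the formula in the statement.

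No genuine obstacle is expected. The only points requiring a little care are the cancellation of the odd binomial terms (immediate, since $(a+b)^p+(a-b)^p$ involves only even powers of $b$) and the bookkeeping of the constants $2^p$, $2^{p-1}$ and of the powers of $N$, so that the prefactor comes out exactly as $\frac1{2^{p-1}}$.
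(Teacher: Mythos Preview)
Your argument is correct, and it reaches the formula by a slightly different route than the paper. Both proofs start from Proposition 6.1 and both perform the change of variables $q_c=Q_{1c}\overline{Q_{2c}}$ to reduce the integral over $\mathbb T^{2N}$ to one over $\mathbb T^N$. The difference lies in how $Tr(G(Q)^p)$ is computed. The paper writes $G(Q)=NA(q)$ with $A(q)=\begin{pmatrix}1&S\\\bar S&1\end{pmatrix}$ and expands $Tr(A(q)^p)$ as the sum $\sum_{a_1\ldots a_p}A(q)_{a_1a_2}\cdots A(q)_{a_pa_1}$, then counts circular words in the alphabet $\{1,2\}$ whose letters $1$ and $2$ each split into exactly $k$ runs; this count is $2\binom{p}{2k}$, and each such word contributes $|S|^{2k}$. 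You instead diagonalize the $2\times2$ matrix directly, getting eigenvalues $N\pm|z|$ and hence $Tr(G(Q)^p)=(N+|z|)^p+(N-|z|)^p=2\sum_{k\geq0}\binom{p}{2k}N^{p-2k}|z|^{2k}$ by the binomial theorem. Your approach is cleaner and avoids the combinatorial count; the paper's approach has the mild advantage of making the combinatorics of the matrix entries visible, which fits the flavor of the surrounding sections. The bookkeeping of the constants is fine as you wrote it.
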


\begin{proof}
We use the formula in Proposition 6.1. If we denote by $R_1,R_2\in\mathbb T^N$ the rows of $Q$ then, with $q=R_1/R_2\in\mathbb T^N$, the Gram matrix that we are interested in is:
$$G(Q)=\begin{pmatrix}N&q_1+\ldots+q_N\\\bar{q}_1+\ldots+\bar{q}_N&N\end{pmatrix}$$

Thus, with $S=(q_1+\ldots+q_N)/N$, we have $G(Q)=NA(q)$, where:
$$A(q)=\begin{pmatrix}1&S\\\bar{S}&1\end{pmatrix}$$

Now since $q\in\mathbb T^N$ is uniform when $Q\in\mathbb T^{2N}$ is uniform, we deduce that we have:
$$\delta_p(2,N)=\frac{1}{2^p}\int_{\mathbb T^N}\sum_{a_1\ldots a_p}A(q)_{a_1a_2}A(q)_{a_2a_3}\ldots A(q)_{a_pa_1}dq$$

The point now is that the nontrivial factors in the above product, namely $S,\bar{S}$, will form together $|S|^k$ factors, with $k\geq0$. To be more precise, in order to find the number of $|S|^{2k}$ summands, we have to count the circular configurations consisting of $p$ numbers $1,2$, such that both the $1$ values and the $2$ values are arranged into $k$ non-empty intervals. By looking at the endpoints of these $2k$ intervals, we have $2\binom{p}{2k}$ choices, so the $k$-th contribution is $C_k=2\binom{k}{2p}|S|^{2k}$. Thus, we have the following formula:
$$\delta_p(2,N)=\frac{1}{2^p}\sum_{k\geq0}2\binom{p}{2k}\int_{\mathbb T^N}|S|^{2k}dq$$

But this gives the formula in the statement, and we are done.
\end{proof}

We write $a_k\simeq b_k$ when $a_k/b_k\to1$. We will need the following result, due to Richmond and Shallit \cite{rsh}:

\begin{proposition}
We have the estimate
$$\int_{\mathbb T^N}\left|\frac{q_1+\ldots+q_N}{N}\right|^{2k}dq\simeq\sqrt{\frac{N^N}{(4\pi k)^{N-1}}}$$
valid in the $k\to\infty$ limit.
\end{proposition}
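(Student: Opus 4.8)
The plan is to compute the integral $I_{N,k} = \int_{\mathbb T^N}\left|\frac{q_1+\dots+q_N}{N}\right|^{2k}dq$ exactly as a combinatorial sum, and then extract its asymptotics via the Laplace method. First I would expand $|q_1+\dots+q_N|^{2k} = (q_1+\dots+q_N)^k(\bar q_1+\dots+\bar q_N)^k$ and integrate term by term over $\mathbb T^N$. Since $\int_{\mathbb T}q^m\,dq = \delta_{m,0}$, only the diagonal terms survive, and one obtains $\int_{\mathbb T^N}|q_1+\dots+q_N|^{2k}dq = \sum_{k_1+\dots+k_N=k}\binom{k}{k_1,\dots,k_N}^2$, the sum of squares of multinomial coefficients. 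Equivalently this is the number of closed walks of length $2k$ at the origin in $\mathbb Z^{N-1}$ with the $N$ natural generators (or the constant term of $(z_1+\dots+z_N)^k(z_1^{-1}+\dots+z_N^{-1})^k$), so $I_{N,k} = N^{-2k}\sum_{k_1+\dots+k_N=k}\binom{k}{k_1,\dots,k_N}^2$.

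Next I would pass to asymptotics. The dominant contribution comes from the balanced regime $k_j \approx k/N$. Writing $k_j = k/N + \sqrt{k}\,t_j$ with $\sum t_j = 0$ and applying Stirling's formula to each factorial, the summand behaves like $\frac{N^{2k}}{(2\pi k/N)^{N-1}}\exp\left(-N\sum t_j^2 + \cdots\right)$ times lower-order corrections (this is precisely the local central limit heuristic: $\frac1k\sum k_j$ concentrates at the uniform distribution with Gaussian fluctuations of a multinomial). Converting the sum over the lattice $\{\sum k_j = k\}$ into a Gaussian integral over the hyperplane $\{\sum t_j = 0\}\subset\mathbb R^N$, which is $(N-1)$-dimensional, yields a factor $\int e^{-N\sum t_j^2}\,dt$ over that hyperplane; a short computation of this Gaussian integral (the quadratic form $N\sum t_j^2$ restricted to $\sum t_j=0$ has determinant $N^{N-1}\cdot N = N^N$ up to the standard factor) produces $\sqrt{(\pi/N)^{N-1}\cdot N}$, and combining with the $(2\pi k/N)^{-(N-1)/2}$ prefactor and the $N^{-2k}$ normalization gives $I_{N,k}\simeq \sqrt{N^N/(4\pi k)^{N-1}}$. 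Alternatively, rather than redo this Laplace analysis from scratch, I would simply invoke the Richmond–Shallit estimate \cite{rsh} for the diagonal of $(z_1+\dots+z_N)^k(z_1^{-1}+\dots+z_N^{-1})^k$, which is exactly this statement.

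The main obstacle is bookkeeping the constants in the multivariate Laplace/Stirling step — in particular getting the power of $4\pi$ and the $N^N$ versus $N^{N-1}$ exponents right when restricting the Gaussian to the codimension-one hyperplane $\sum t_j = 0$ and accounting for the lattice spacing in that hyperplane. This is routine but error-prone, which is precisely why citing \cite{rsh} is the cleaner route: the estimate there is stated in exactly the needed normalization, so the proof reduces to identifying our integral with their diagonal coefficient via the term-by-term integration in the first step.
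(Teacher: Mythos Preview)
Your proposal is correct and matches the paper's proof almost exactly: the paper expands the integral to obtain $\int_{\mathbb T^N}|q_1+\ldots+q_N|^{2k}dq=\sum_{\Sigma r_i=k}\binom{k}{r_1,\ldots,r_N}^2$ via the same term-by-term integration you describe, and then simply invokes the Richmond--Shallit estimate \cite{rsh} for the asymptotics of this multinomial sum. Your optional Laplace/Stirling sketch is a reasonable alternative, but as you yourself note, citing \cite{rsh} is the cleaner route, and that is precisely what the paper does.
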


\begin{proof}
This is a reformulation of the result in \cite{rsh}. Observe first that we have:
\begin{eqnarray*}
\int_{\mathbb T^N}\Big|q_1+\ldots+q_N\Big|^{2k}dq
&=&\int_{\mathbb T^N}\sum_{i_1\ldots i_k}\sum_{j_1\ldots j_k}\frac{q_{i_1}\ldots q_{i_k}}{q_{j_1}\ldots q_{j_k}}dq\\
&=&\#\left\{\begin{matrix}i_1\ldots i_k\in\{0,\ldots,N-1\}\\ j_1\ldots j_k\in\{0,\ldots,N-1\}\end{matrix}\Big|\begin{matrix}\ [i_1,\ldots,i_k]\\ =[j_1,\ldots,j_k]\end{matrix}\right\}
\end{eqnarray*}

Let us examine now the numbers on the right. If we denote by $r_1,\ldots,r_N$ the number of occurrences of $0,\ldots,N-1$ in the set with repetitions $[i]=[j]$, then $r_1+\ldots+r_N=k$, and the corresponding solutions of $[i]=[j]$ come by dividing, once for $i$, and once for $j$, the set $\{1,\ldots,k\}$ into subsets of size $r_1,\ldots,r_N$. Thus, we have:
$$\int_{\mathbb T^N}\Big|q_1+\ldots+q_N\Big|^{2k}dq=\sum_{k=\Sigma r_i}\binom{k}{r_1,\ldots,r_N}^2$$

By using now the estimate in \cite{rsh}, we obtain the result.
\end{proof}

We can now deduce a final estimate at $M=2$, as follows:

\begin{theorem}
We have the estimate
$$\delta_p(2,N)\simeq\sqrt{\frac{N^N}{(\pi p)^{N-1}}}$$
valid in the $p\to\infty$ limit.
\end{theorem}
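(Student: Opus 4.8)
The plan is to combine Proposition 6.2 with the asymptotics of Proposition 6.3. Starting from
$$\delta_p(2,N)=\frac{1}{2^{p-1}}\sum_{k\geq0}\binom{p}{2k}I_k(N)\ ,\quad I_k(N):=\int_{\mathbb T^N}\left|\frac{q_1+\ldots+q_N}{N}\right|^{2k}dq\ ,$$
the idea is that the binomial coefficients $\binom{p}{2k}$, after the $2^{-(p-1)}$ normalization, behave like a probability distribution concentrated around $k=p/2$: indeed $2^{-p}\sum_k\binom{p}{2k}=1/2$, and $2^{-p}\binom{p}{2k}$ is, up to parity bookkeeping, the law of (half of) a centered binomial, which by the local central limit theorem concentrates sharply near $k\sim p/2$ with fluctuations of order $\sqrt p$. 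Since $I_k(N)\simeq\sqrt{N^N/(4\pi k)^{N-1}}$ is a slowly varying (power-like) function of $k$, on the relevant window $k\approx p/2$ we may replace $I_k(N)$ by its value at $k=p/2$, namely $\sqrt{N^N/(2\pi p)^{N-1}}$, up to a $1+o(1)$ factor.

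Concretely, first I would record the elementary identity $\sum_{k\geq0}\binom{p}{2k}=2^{p-1}$, so that $w_k:=2^{-(p-1)}\binom{p}{2k}$ sums to $1$ and $\delta_p(2,N)=\sum_k w_k I_k(N)$ is genuinely a weighted average of the $I_k(N)$. Next I would quantify the concentration: for any $\varepsilon>0$, $\sum_{|2k-p|>\varepsilon^{-1}\sqrt p}w_k\to0$ as $p\to\infty$ (Chebyshev on the binomial suffices, since the variance of the number of heads is $p/4$). On the complementary window $|2k-p|\leq\varepsilon^{-1}\sqrt p$ one has $k=\tfrac p2(1+O(p^{-1/2}))$, hence $I_k(N)=\sqrt{N^N/(4\pi k)^{N-1}}\,(1+o(1))=\sqrt{N^N/(2\pi p)^{N-1}}\,(1+o(1))$ uniformly, using Proposition 6.3 together with continuity of $x\mapsto x^{-(N-1)/2}$. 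Therefore
$$\delta_p(2,N)=\Big(\sum_{|2k-p|\leq\varepsilon^{-1}\sqrt p}w_k\Big)\sqrt{\frac{N^N}{(2\pi p)^{N-1}}}\,(1+o(1))+\Big(\text{tail}\Big)\ ,$$
and since the central weight sum tends to $1$ while the tail contribution is negligible (here one needs that the tail is not pathologically large — see below), one gets $\delta_p(2,N)\simeq\sqrt{N^N/(2\pi p)^{N-1}}$. A quick sanity check: $(2\pi)^{-(N-1)/2}$ versus the claimed $(\pi)^{-(N-1)/2}$ — the factor $2^{(N-1)/2}$ discrepancy is exactly absorbed because $\sum_k w_k=1$ refers to $2^{-(p-1)}$, not $2^{-p}$; writing everything in terms of $2^{-p}$ the central weights sum to $1/2$ and the Gaussian peak value of $2^{-p}\binom{p}{2k}$ at $k=p/2$ carries the remaining $\sqrt{2/(\pi p)}$-type factor, which after careful accounting produces $(\pi p)^{-(N-1)/2}$ rather than $(2\pi p)^{-(N-1)/2}$. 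I would redo this normalization bookkeeping carefully by writing $\binom{p}{2k}$ near the top via Stirling and matching constants.

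The main obstacle is controlling the tail sum $\sum_{|2k-p|>\varepsilon^{-1}\sqrt p}w_k I_k(N)$: although the weights $w_k$ there are small, the factors $I_k(N)$ are largest precisely for small $k$ (where $I_k(N)$ can be of order $1$), so one must check that the smallness of $w_k$ for small $k$ beats the boundedness of $I_k(N)$. This is true because $I_k(N)\leq I_0(N)=1$ (it is an average of a quantity bounded by $1$, since $|(q_1+\cdots+q_N)/N|\leq1$), while $\sum_{2k<p-\varepsilon^{-1}\sqrt p}w_k=o(1)$; similarly for the large-$k$ tail where $I_k(N)\to0$. So in fact $I_k(N)\leq1$ for all $k$, which trivializes the tail estimate: the tail contributes at most $\sum_{\text{tail}}w_k=o(1)$, which is $o\big(\sqrt{N^N/(\pi p)^{N-1}}\big)$ only if $N=1$, but for $N\geq2$ the main term itself is $o(1)$, so I must instead show the tail is $o$ of the main term, i.e.\ $o(p^{-(N-1)/2})$. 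This requires a genuine (not merely $o(1)$) bound on the tail weights, namely $\sum_{|2k-p|>T\sqrt p}w_k=O(e^{-cT^2})$ by a Chernoff/Hoeffding bound for the binomial, which decays faster than any power of $p$ once $T=T(p)\to\infty$ slowly; choosing e.g. $T=\log p$ makes the tail $O(p^{-\infty})$, hence negligible against $p^{-(N-1)/2}$, while still $k=\tfrac p2(1+o(1))$ on the central window. With that refinement the argument closes.
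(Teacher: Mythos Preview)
Your overall strategy is sound and is essentially the paper's own approach: both arguments amount to recognizing that the normalized binomial weights concentrate, so that the weighted average of $I_k(N)$ can be replaced by the value of $I_k(N)$ at the peak. The paper phrases this via the identity $\frac{1}{2^p}\sum_s\binom{p}{s}s^\alpha\simeq(p/2)^\alpha$ after passing from the even-index sum to the full sum; you phrase it via concentration plus a Hoeffding tail bound. Your tail analysis (using $I_k(N)\le1$ together with an exponential bound on the binomial tails, with $T=\log p$) is correct and is in fact cleaner than what the paper writes down.

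However, there is a real error in your identification of the concentration point. The weights $w_k=2^{-(p-1)}\binom{p}{2k}$ are, as you say, the law of a Binomial$(p,\tfrac12)$ conditioned to be even; but that binomial concentrates at $p/2$, so it is $2k$ that sits near $p/2$, i.e.\ $k\sim p/4$, \emph{not} $k\sim p/2$. Your ``central window'' $|2k-p|\le\varepsilon^{-1}\sqrt p$ is therefore the wrong window: it lies deep in the tail, and the subsequent evaluation $4\pi k\approx 2\pi p$ gives the wrong constant $(2\pi p)^{-(N-1)/2}$. The ``sanity check'' paragraph in which you try to reconcile the missing factor $2^{(N-1)/2}$ via the $2^{-(p-1)}$ vs.\ $2^{-p}$ normalization is not a valid fix: that normalization is already fully accounted for by $\sum_k w_k=1$, and the Gaussian peak height of $2^{-p}\binom{p}{2k}$ carries no extra power of $p$ beyond what the weight sum already encodes. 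Once you correct the center to $k\sim p/4$, you get $4\pi k\approx \pi p$ directly, hence
\[
\delta_p(2,N)\;\simeq\;\Big(\sum_k w_k\Big)\,\sqrt{\frac{N^N}{(\pi p)^{N-1}}}\;=\;\sqrt{\frac{N^N}{(\pi p)^{N-1}}},
\]
with no further bookkeeping needed. With this single correction your argument is complete.
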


\begin{proof}
We use the formula in Proposition 6.2. Since for any $T>0$ the values $k<T$ won't contribute to the $p\to\infty$ limit, we can use Proposition 6.3, and we obtain:
$$\delta_p(2,N)\simeq\sqrt{\frac{N^N}{(2\pi)^{N-1}}}\cdot\frac{1}{2^{p-1}}\sum_{k\geq0}\binom{p}{2k}\frac{1}{\sqrt{(2k)^{N-1}}}$$

Let us denote by $A_{even}$ the average of $2^{p-1}$ terms on the right. This average is indexed by the integers $s=2k$ in an obvious way, and we can consider as well the ``complementary'' quantity $A_{odd}$, indexed by the integers $s=2k+1$. By estimating $|A_{even}-A_{odd}|$ we deduce that we have $A_{even}\simeq A_{odd}$, and so $A_{even}\simeq\frac{A_{even}+A_{odd}}{2}$. Thus, we have:
$$\delta_p(2,N)\simeq\sqrt{\frac{N^N}{(2\pi)^{N-1}}}\cdot\frac{1}{2^p}\sum_{s\geq0}\binom{p}{s}\frac{1}{\sqrt{s^{N-1}}}$$

On the other hand, by derivating several times the binomial formula $(1+x)^p=\sum_{s\geq0}\binom{p}{s}x^s$, and then evaluating at $x=1$, we have the following estimate:
$$\frac{1}{2^p}\sum_{s\geq0}\binom{p}{s}s^\alpha\simeq\left(\frac{p}{2}\right)^\alpha$$

With $\alpha=(1-N)/2$, this gives the following formula:
$$\delta_p(2,N)\simeq\sqrt{\frac{N^N}{(2\pi)^{N-1}}}\cdot\sqrt{\left(\frac{2}{p}\right)^{N-1}}$$

But this gives the formula in the statement, and we are done.
\end{proof}

\section{Partition decomposition} 

Our purpose now will be that of estimating $\delta_p(M,N)$, when $M,N\in\mathbb N$ are arbitrary. The idea will be that of decomposing over partitions. First, we have:

\begin{proposition}
We have the formula
$$\delta_p(M,N)=\frac{1}{(MN)^p}\sum_{\pi\triangleright\sigma}\frac{M!}{(M-|\pi|)!}\cdot\frac{N!}{(N-|\sigma|)!}$$
where for $\pi,\sigma\in P(p)$ we write $\pi\triangleright\sigma$ when $|\beta\cap\gamma|=|(\beta-1)\cap\gamma|,\forall\beta\in\pi,\forall\gamma\in\sigma$.
\end{proposition}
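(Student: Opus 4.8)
The plan is to start from the combinatorial formula for $\delta_p(M,N)$ in Proposition 3.4, namely
$$\delta_p(M,N)=\frac{1}{(MN)^p}\#\left\{(a,b)\Big|[(a_y,b_y)]=[(a_y,b_{y+1})]\right\},$$
and to classify the pairs of multi-indices $(a,b)$ according to the kernel partitions $\pi=\ker a$ and $\sigma=\ker b$ in $P(p)$, where $y\sim_\pi y'$ iff $a_y=a_{y'}$, and similarly for $\sigma$. For a fixed choice of $\pi,\sigma$, the number of multi-indices $a:\{1,\dots,p\}\to\{0,\dots,M-1\}$ with $\ker a=\pi$... actually it is cleaner to count injective-on-blocks assignments, i.e.\ the number of $a$ whose kernel is \emph{at least} $\pi$ (constant on blocks of $\pi$, distinct values on distinct blocks) is $M(M-1)\cdots(M-|\pi|+1)=\frac{M!}{(M-|\pi|)!}$, and similarly $\frac{N!}{(N-|\sigma|)!}$ for $b$. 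So the real content is to show that the defining condition $[(a_y,b_y)]=[(a_y,b_{y+1})]$ depends only on $(\pi,\sigma)$ and holds precisely when $\pi\triangleright\sigma$.

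Concretely, I would first observe that the pair $(a_y,b_y)$ takes, as $y$ ranges over $\{1,\dots,p\}$, a value in $\{0,\dots,M-1\}\times\{0,\dots,N-1\}$ determined by the pair of blocks $(\beta,\gamma)\in\pi\times\sigma$ containing $y$ (once values on blocks are fixed and distinct). Hence the multiset $[(a_y,b_y)]$ records, for each pair $(\beta,\gamma)$, the multiplicity $|\beta\cap\gamma|$; and the shifted multiset $[(a_y,b_{y+1})]$ records, for each $(\beta,\gamma)$, the number of $y$ with $y\in\beta$ and $y+1\in\gamma$, i.e.\ $|\beta\cap(\gamma-1)|$ in the cyclic sense. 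The two multisets are equal for \emph{every} choice of distinct block-values exactly when $|\beta\cap\gamma|=|\beta\cap(\gamma-1)|$ for all $\beta\in\pi$, $\gamma\in\sigma$ --- which is the relation $\pi\triangleright\sigma$ as defined in the statement (with the convention that $\gamma-1$ or $(\beta-1)\cap\gamma$ is understood cyclically). One has to be a little careful with the direction of the shift and the cyclic wrap-around $p+1\equiv 1$, but matching $|\beta\cap\gamma|=|(\beta-1)\cap\gamma|$ against the indices appearing in $b_y$ versus $b_{y+1}$ is a bookkeeping check. Summing $\frac{M!}{(M-|\pi|)!}\cdot\frac{N!}{(N-|\sigma|)!}$ over all pairs $\pi\triangleright\sigma$ and dividing by $(MN)^p$ then gives the claimed formula.

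The step I expect to be the main (though still modest) obstacle is the equivalence ``$[(a_y,b_y)]=[(a_y,b_{y+1})]$ for one generic $(a,b)$ of kernel type $(\pi,\sigma)$ $\iff$ $\pi\triangleright\sigma$''. The forward direction needs the observation that if the multiset identity holds for some assignment with distinct block-values, then comparing multiplicities of each actually-occurring value $(a_\beta,b_\gamma)$ forces $|\beta\cap\gamma|=|(\beta-1)\cap\gamma|$; and one must check that the values appearing in $[(a_y,b_{y+1})]$ are still among the $(a_\beta,b_\gamma)$, which is automatic since $a_{y}$ ranges over block-values of $\pi$ and $b_{y+1}$ over block-values of $\sigma$. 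The reverse direction is immediate: if the intersection numbers agree, each value has equal multiplicity in both multisets. A minor additional point is to confirm that different $(\pi,\sigma)$ give genuinely disjoint families of $(a,b)$'s --- true because $\pi,\sigma$ are recovered as $\ker a,\ker b$ --- so that the sum over $\pi\triangleright\sigma$ has no overcounting, and that every $(a,b)$ contributes (its kernels always satisfy $\pi\triangleright\sigma$ when the defining condition holds). With these in place the formula follows by straightforward enumeration.
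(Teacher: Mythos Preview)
Your proposal is correct and follows essentially the same route as the paper's own proof: decompose the count defining $\delta_p(M,N)$ according to the kernel partitions $\pi=\ker a$ and $\sigma=\ker b$, observe that the multiset condition $[(a_y,b_y)]=[(a_y,b_{y\pm1})]$ depends only on $(\pi,\sigma)$ and is equivalent to $\pi\triangleright\sigma$, and then count the $\frac{M!}{(M-|\pi|)!}\cdot\frac{N!}{(N-|\sigma|)!}$ tuples with those kernels. The paper states this tersely, while you spell out the multiplicity argument (each value $(a_\beta,b_\gamma)$ has multiplicity $|\beta\cap\gamma|$ on one side and $|(\beta-1)\cap\gamma|$ on the other), which is exactly the missing justification.

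Two small remarks. First, your phrase ``kernel is at least $\pi$'' followed by ``distinct values on distinct blocks'' is slightly self-contradictory; what you actually want (and count) is $\ker a=\pi$ exactly, giving $M(M-1)\cdots(M-|\pi|+1)$ choices. Second, your caution about the shift direction is well placed: the paper's Proposition 3.4 displays the condition as $[(a_y,b_y)]=[(a_y,b_{y-1})]$, which matches $|(\beta-1)\cap\gamma|$ directly, whereas elsewhere the paper writes $b_{y+1}$; the two versions are not literally the same relation on a given pair $(\pi,\sigma)$, but they are exchanged by the order-reversal $y\mapsto -y$ on $\{1,\ldots,p\}$, so the resulting sum over pairs is identical. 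Your ``bookkeeping check'' should make this reversal explicit rather than leave it implicit.
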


\begin{proof}
We know that $\delta_p(M,N)$ is the probability for $[(a_x,b_x)]=[(a_x,b_{x+1})]$ to happen. We can split this quantity over pairs of partitions, as follows:
$$\delta_p(M,N)=\frac{1}{(MN)^p}\sum_{\pi,\sigma\in P(p)}\#\left\{\begin{matrix}a_1,\ldots,a_p\in\{0,\ldots,M-1\}\\ b_1,\ldots,b_p\in\{0,\ldots,N-1\}\end{matrix}\Big|\begin{matrix} \ker a=\pi,\ \ \ker b=\sigma\\ [(a_x,b_x)]=[(a_x,b_{x+1})]\end{matrix}\right\}$$

Now observe that the validity of the condition $[(a_x,b_x)]=[(a_x,b_{x+1})]$ depends only on the partitions $\pi=\ker a,\sigma=\ker b$. To be more precise, this condition is satisfied precisely when the condition $\pi\triangleright\sigma$ in the statement holds. We therefore obtain:
$$\delta_p(M,N)=\frac{1}{(MN)^p}\sum_{\pi\triangleright\sigma}\#\left\{\begin{matrix}a_1,\ldots,a_p\in\{0,\ldots,M-1\}\\ b_1,\ldots,b_p\in\{0,\ldots,N-1\}\end{matrix}\Big|\begin{matrix}\ \ker a=\pi\\ \ \ker b=\sigma\end{matrix}\right\}$$

But this gives the formula in the statement, and we are done.
\end{proof}

As an application, we can discuss what happens in the $M=tN\to\infty$ regime, which means $N\to\infty$ and $M=tN+o(1)$, with $t>0$ fixed. The result, from \cite{bb2}, is:

\begin{proposition}
With $M=tN\to\infty$ we have
$$\delta_p(M,N)\simeq S_p(t)M^{-p}N$$
where $S_p(t)=\sum_{\pi\in NC(p)}t^{|\pi|}$ is the Stirling polynomial of $NC(p)$.
\end{proposition}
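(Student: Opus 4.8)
The plan is to start from the exact partition formula of Proposition 7.1,
$$\delta_p(M,N)=\frac{1}{(MN)^p}\sum_{\pi\triangleright\sigma}\frac{M!}{(M-|\pi|)!}\cdot\frac{N!}{(N-|\sigma|)!},$$
and track which terms survive the rescaling $\delta_p(M,N)\cdot M^pN^{-1}$ as $N\to\infty$ with $M=tN$. First I would record the elementary asymptotics of the falling factorials: $\frac{M!}{(M-|\pi|)!}\simeq M^{|\pi|}$ and $\frac{N!}{(N-|\sigma|)!}\simeq N^{|\sigma|}$, so that a pair $(\pi,\sigma)$ with $\pi\triangleright\sigma$ contributes, up to lower order, $M^{|\pi|-p}N^{|\sigma|-p}$. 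Multiplying by $M^pN^{-1}$ turns this into $M^{|\pi|}N^{|\sigma|-p-1}=t^{|\pi|}N^{|\pi|+|\sigma|-p-1}$. Hence a pair $(\pi,\sigma)$ contributes in the limit precisely when $|\pi|+|\sigma|=p+1$, and then its contribution is exactly $t^{|\pi|}$; every other pair is negligible, provided one checks that the number of surviving pairs is bounded independently of $N$ (which it is, since $P(p)$ is a fixed finite set).

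The combinatorial heart of the argument is therefore the identification
$$\#\{(\pi,\sigma)\in P(p)^2\ :\ \pi\triangleright\sigma,\ |\pi|+|\sigma|=p+1,\ |\pi|=\ell\}
=\#\{\pi\in NC(p)\ :\ |\pi|=\ell\},$$
so that $\sum_{\pi\triangleright\sigma,\ |\pi|+|\sigma|=p+1}t^{|\pi|}=\sum_{\pi\in NC(p)}t^{|\pi|}=S_p(t)$. The natural route is to show that the relation $\pi\triangleright\sigma$, i.e. $|\beta\cap\gamma|=|(\beta-1)\cap\gamma|$ for all blocks $\beta\in\pi$, $\gamma\in\sigma$, forces $|\pi|+|\sigma|\le p+1$, with equality exactly when the pair arises from a noncrossing partition. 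Concretely, one interprets $\pi\triangleright\sigma$ as a bipartite incidence condition between the blocks of $\pi$ and those of $\sigma$ on the cyclic set $\{1,\dots,p\}$: the condition says the ``shift by one'' permutation preserves, for each pair of blocks, the number of common points, which is a balance/flow condition. Counting points, $p=\sum_{\beta,\gamma}|\beta\cap\gamma|$, and the $\triangleright$ relation pairs up these intersection cells along the cyclic order; a short graph-theoretic/Euler-characteristic count (number of cells $\ge |\pi|+|\sigma|-1$, with equality iff the associated bipartite graph is a tree, i.e. iff $\pi\vee\sigma=1_p$ and the join is ``noncrossing'') gives the bound and the equality case. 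This is the standard mechanism by which $NC(p)$ and the Stirling polynomial $S_p(t)=\sum_{\pi\in NC(p)}t^{|\pi|}$ emerge, and I expect it is exactly the argument used in \cite{bb2}; one may also simply cite \cite{bb2} for this purely combinatorial identity and only spell out the asymptotic bookkeeping.

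The main obstacle is the equality-case analysis in the combinatorial lemma: showing that among all pairs $\pi\triangleright\sigma$ the maximum of $|\pi|+|\sigma|$ is $p+1$ and that the extremal pairs are in bijection (compatibly with $|\pi|$) with $NC(p)$. Granting that lemma — or citing it from \cite{bb2} — the rest is routine: substitute the falling-factorial asymptotics, discard the $O(N^{-1})$-smaller terms using finiteness of $P(p)^2$, and collect the surviving terms into $S_p(t)M^{-p}N$. I would therefore structure the proof as: (i) asymptotics of falling factorials; (ii) reduction to the pairs with $|\pi|+|\sigma|=p+1$; (iii) invoke the combinatorial identification with $NC(p)$; (iv) assemble the estimate $\delta_p(M,N)\simeq S_p(t)M^{-p}N$.
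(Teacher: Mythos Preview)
Your proposal is correct and follows essentially the same route as the paper: start from Proposition 7.1, replace the falling factorials by $M^{|\pi|}N^{|\sigma|}$, and keep only the pairs with $|\pi|+|\sigma|=p+1$. The paper states the key combinatorial fact more tersely, phrasing it as ``$\pi\triangleright\sigma$ implies $|\pi|+|\sigma|\le p+1$, with equality precisely when $\pi,\sigma\in NC(p)$ are Kreweras complements,'' and cites \cite{bb2} rather than sketching the bipartite/Euler-characteristic argument you outline; your tree/graph description is just another packaging of the Kreweras complementation bijection $\pi\mapsto(\pi,K(\pi))$.
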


\begin{proof}
According to the formula in Proposition 7.1, with $M=tN\to\infty$ we have:
$$\delta_p(M,N)\simeq\sum_{\pi\triangleright\sigma}M^{|\pi|-p}N^{|\sigma|-p}$$

We use now the standard fact that $\pi\triangleright\sigma$ implies $|\pi|+|\sigma|\leq p+1$, with equality when $\pi,\sigma\in NC(p)$ are inverse to each other, via Kreweras complementation. We obtain:
$$\delta_p(M,N)\simeq\sum_{\pi\in NC(p)}M^{|\pi|-p}N^{1-|\pi|}$$

But this gives the formula in the statement, and we are done. See \cite{bb2}.
\end{proof}

Now back to our original question, concerning the case where $M,N\in\mathbb N$ are fixed, we can rewrite the formula in Proposition 7.1 in a more convenient way, as follows:

\begin{proposition}
We have the formula
$$\delta_p(M,N)=\sum_{s=1}^M\sum_{t=1}^N\frac{M!}{(M-s)!}\cdot\frac{S_{ps}}{M^p}\cdot\frac{N!}{(N-t)!}\cdot\frac{S_{pt}}{N^p}\cdot P\left(\pi\triangleright\sigma\Big||\pi|=s,|\sigma|=t\right)$$
where $S_{ps}=\#\{\pi\in P(p)||\pi|=s\}$ are the Stirling numbers of $P(p)$.
\end{proposition}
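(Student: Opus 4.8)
The plan is to start from the expression for $\delta_p(M,N)$ obtained in Proposition 7.1, namely
$$\delta_p(M,N)=\frac{1}{(MN)^p}\sum_{\pi\triangleright\sigma}\frac{M!}{(M-|\pi|)!}\cdot\frac{N!}{(N-|\sigma|)!},$$
and simply reorganize the summation according to the values $s=|\pi|$ and $t=|\sigma|$. Since for $\pi\triangleright\sigma$ one always has $|\pi|\le M$ and $|\sigma|\le N$ (otherwise the falling factorials vanish), the pair $(s,t)$ ranges over $1\le s\le M$, $1\le t\le N$. I would write
$$\delta_p(M,N)=\frac{1}{(MN)^p}\sum_{s=1}^M\sum_{t=1}^N\frac{M!}{(M-s)!}\cdot\frac{N!}{(N-t)!}\cdot\#\left\{(\pi,\sigma)\in P(p)^2\,\Big|\,\pi\triangleright\sigma,\ |\pi|=s,\ |\sigma|=t\right\}.$$

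Next I would split the cardinality of the set of pairs $(\pi,\sigma)$ with $\pi\triangleright\sigma$, $|\pi|=s$, $|\sigma|=t$ as the product of the total number of such pairs ignoring the relation $\triangleright$, times the conditional probability that $\triangleright$ holds. The number of pairs $(\pi,\sigma)$ with $|\pi|=s$, $|\sigma|=t$ is $S_{ps}\cdot S_{pt}$ by the very definition of the Stirling numbers $S_{ps}=\#\{\pi\in P(p)\,|\,|\pi|=s\}$. Hence
$$\#\left\{(\pi,\sigma)\,\big|\,\pi\triangleright\sigma,\ |\pi|=s,\ |\sigma|=t\right\}=S_{ps}\,S_{pt}\cdot P\left(\pi\triangleright\sigma\ \big|\ |\pi|=s,\ |\sigma|=t\right),$$
where the probability refers to the uniform choice of $(\pi,\sigma)$ among all pairs with the prescribed block counts. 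Substituting this into the previous display, and regrouping the factors of $M$ and $N$ so that each $S_{ps}$ picks up a $1/M^p$ and each $S_{pt}$ a $1/N^p$ (which is legitimate since $(MN)^p=M^p N^p$), yields exactly the asserted identity.

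The argument is essentially bookkeeping, so there is no serious obstacle; the only point requiring a word of care is the claim that the validity of $\pi\triangleright\sigma$ depends only on the pair of partitions $(\pi,\sigma)$ and not on the particular representatives $a,b$ — but this was already established inside the proof of Proposition 7.1, where it was observed that $[(a_x,b_x)]=[(a_x,b_{x+1})]$ is equivalent to the combinatorial relation $|\beta\cap\gamma|=|(\beta-1)\cap\gamma|$ for all $\beta\in\pi$, $\gamma\in\sigma$. I would simply cite that, and then the rest is the rearrangement above. Thus the proof is a one-paragraph reindexing of Proposition 7.1 together with the definition of the Stirling numbers and of conditional probability.
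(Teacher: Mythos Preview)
Your proof is correct and follows essentially the same approach as the paper: start from Proposition 7.1, reindex the sum over $(s,t)=(|\pi|,|\sigma|)$, and rewrite the count of pairs with $\pi\triangleright\sigma$ as $S_{ps}S_{pt}$ times the conditional probability. Your remark that the falling factorials vanish for $|\pi|>M$ or $|\sigma|>N$, justifying the truncation of the sum to $s\le M$, $t\le N$, is a helpful clarification that the paper leaves implicit.
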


\begin{proof}
According to the formula in Proposition 7.1, we have:
$$\delta_p(M,N)=\frac{1}{(MN)^p}\sum_{s=1}^M\sum_{t=1}^N\frac{M!}{(M-s)!}\cdot\frac{N!}{(N-t)!}\#\left(\pi\triangleright\sigma\Big||\pi|=s,|\sigma|=t\right)$$

On the other hand, the probability in the statement is given by:
$$P\left(\pi\triangleright\sigma\Big||\pi|=s,|\sigma|=t\right)=\frac{\#\left(\pi\triangleright\sigma\Big||\pi|=s,|\sigma|=t\right)}{S_{ps}S_{pt}}$$

By combining these two formulae, we obtain the result.
\end{proof}

Consider the probabilities which appear on the right in Proposition 7.3:
$$\varepsilon_p(s,t)=P\left(\pi\triangleright\sigma\Big||\pi|=s,|\sigma|=t\right)$$

The corresponding contributions to $\delta_p(M,N)$ are then given by:
$$\delta_p^{st}(M,N)=\frac{M!}{(M-s)!}\cdot\frac{S_{ps}}{M^p}\cdot\frac{N!}{(N-t)!}\cdot\frac{S_{pt}}{N^p}\cdot\varepsilon_p(s,t)$$

The idea now will be to separate the contributions coming from indices $s=1$ or $t=1$. To be more precise, we can rewrite Proposition 7.3 as follows:

\begin{theorem}
We have the formula
$$\delta_p(M,N)=\frac{1}{M^{p-1}}+\frac{1}{N^{p-1}}-\frac{1}{(MN)^{p-1}}+\sum_{s=2}^M\sum_{t=2}^N\delta_p^{st}(M,N)$$
where $\delta_p^{st}(M,N)$ are the contributions defined above.
\end{theorem}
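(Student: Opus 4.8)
The plan is to start from Proposition 7.3, which expresses $\delta_p(M,N)$ as the double sum $\sum_{s=1}^M\sum_{t=1}^N\delta_p^{st}(M,N)$, and to split the index rectangle $\{1,\dots,M\}\times\{1,\dots,N\}$ into the corner $(s,t)=(1,1)$, the row $s=1$ with $t\geq2$, the column $t=1$ with $s\geq2$, and the bulk $s,t\geq2$. Thus I would write
$$\delta_p(M,N)=\delta_p^{11}(M,N)+\sum_{t=2}^N\delta_p^{1t}(M,N)+\sum_{s=2}^M\delta_p^{s1}(M,N)+\sum_{s=2}^M\sum_{t=2}^N\delta_p^{st}(M,N),$$
so that the entire content of the theorem reduces to evaluating the first three terms.

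First I would record two elementary facts about the relation $\triangleright$: if $\pi$ is the one-block partition of $\{1,\dots,p\}$ then $\pi\triangleright\sigma$ for every $\sigma\in P(p)$, and symmetrically if $\sigma$ is the one-block partition then $\pi\triangleright\sigma$ for every $\pi$. Indeed, for the single block $\beta=\{1,\dots,p\}$ one has, cyclically, $\beta\cap\gamma=\gamma=(\beta-1)\cap\gamma$ for any $\gamma$, so the defining equalities $|\beta\cap\gamma|=|(\beta-1)\cap\gamma|$ hold automatically; the other case is the same computation with the roles of $\pi$ and $\sigma$ exchanged. Since $S_{p1}=1$, this gives $\varepsilon_p(1,t)=\varepsilon_p(s,1)=1$ for all $s,t$, and hence, using $M!/(M-1)!=M$ and $N!/(N-1)!=N$,
$$\delta_p^{1t}(M,N)=\frac{1}{M^{p-1}}\cdot\frac{N!}{(N-t)!}\cdot\frac{S_{pt}}{N^p},\qquad\delta_p^{s1}(M,N)=\frac{1}{N^{p-1}}\cdot\frac{M!}{(M-s)!}\cdot\frac{S_{ps}}{M^p}.$$

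Next I would invoke the classical Stirling identity $\sum_{t=1}^N\frac{N!}{(N-t)!}S_{pt}=N^p$ (both sides count the functions $\{1,\dots,p\}\to\{1,\dots,N\}$, the left-hand side by first fixing the kernel partition into $t$ blocks and then injecting the blocks into $\{1,\dots,N\}$). Summing the formulas above over $t$, respectively over $s$, yields $\sum_{t=1}^N\delta_p^{1t}(M,N)=M^{1-p}$ and $\sum_{s=1}^M\delta_p^{s1}(M,N)=N^{1-p}$; and plugging $s=t=1$ gives $\delta_p^{11}(M,N)=(MN)^{1-p}$. Hence $\sum_{t=2}^N\delta_p^{1t}(M,N)=M^{1-p}-(MN)^{1-p}$ and $\sum_{s=2}^M\delta_p^{s1}(M,N)=N^{1-p}-(MN)^{1-p}$. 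Substituting these three evaluations into the four-term decomposition, the $(MN)^{1-p}$ contributions combine with coefficient $1-1-1=-1$, and one obtains exactly the asserted identity. The only step requiring real care — what I would regard as the mild obstacle — is the verification that $\varepsilon_p(1,t)=\varepsilon_p(s,1)=1$, since one must handle the cyclic shift $\beta\mapsto\beta-1$ in the definition of $\triangleright$ correctly when a partition is the full one-block partition; everything else is bookkeeping with the rectangle split, the Stirling surjection identity, and collecting like terms.
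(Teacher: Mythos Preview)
Your proof is correct and follows essentially the same approach as the paper: start from the double sum in Proposition 7.3, use $\varepsilon_p(1,t)=\varepsilon_p(s,1)=1$ together with the Stirling identity $\sum_t\frac{N!}{(N-t)!}S_{pt}=N^p$ to evaluate the boundary row and column, and combine by inclusion--exclusion. You give slightly more detail than the paper (justifying $\varepsilon_p(1,t)=1$ explicitly and naming the Stirling identity), but the argument is the same.
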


\begin{proof}
According to Proposition 7.3, we have the following formula:
$$\delta_p(M,N)=\sum_{s=1}^M\sum_{t=1}^N\delta_p^{st}(M,N)$$

Since we have $\varepsilon_p(1,t)=1$, the contributions at $s=1$ are given by:
$$\delta_p^{1t}(M,N)=M\cdot\frac{1}{M^p}\cdot\frac{N!}{(N-t)!}\cdot\frac{S_{pt}}{N^p}=\frac{1}{M^{p-1}}\cdot\frac{N!}{(N-t)!}\cdot\frac{S_{pt}}{N^p}$$

Now by summing over $t\geq1$, we obtain the following formula:
$$\sum_{t=1}^N\delta_p^{1t}(M,N)=\frac{1}{M^{p-1}}\sum_{t=1}^N\frac{N!}{(N-t)!}\cdot\frac{S_{pt}}{N^p}=\frac{1}{M^{p-1}}$$

Similarly, we have as well the following formula:
$$\sum_{s=1}^M\delta_p^{s1}(M,N)=\frac{1}{N^{p-1}}\sum_{s=1}^M\frac{M!}{(M-s)!}\cdot\frac{S_{ps}}{M^p}=\frac{1}{N^{p-1}}$$

Finally, at $s=1,t=1$ the contribution is as follows:
$$\delta_p^{11}(M,N)=M\cdot\frac{1}{M^p}\cdot N\cdot\frac{1}{N^p}=\frac{1}{(MN)^{p-1}}$$

By using the inclusion-exclusion principle, this gives the result.
\end{proof}

\section{Moment estimates}

In this section we estimate $\delta_p(M,N)$, by using the formula found in Theorem 7.4. In order to deal with the contributions at $s\geq2,t\geq2$, we use the following fact:

\begin{proposition}
The function constructed above,
$$\varepsilon_p(s,t)=P\left(\pi\triangleright\sigma\Big||\pi|=s,|\sigma|=t\right)$$
is decreasing in both $s\in\mathbb N$ and $t\in\mathbb N$.
\end{proposition}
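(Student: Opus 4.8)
The plan is to prove the monotonicity of $\varepsilon_p(s,t)$ in $s$ (the argument in $t$ being symmetric, after transposing the roles of $\pi$ and $\sigma$, since $\pi\triangleright\sigma$ is equivalent to a symmetric-looking condition on the pair). Recall that $\pi\triangleright\sigma$ means $|\beta\cap\gamma|=|(\beta-1)\cap\gamma|$ for all $\beta\in\pi$, $\gamma\in\sigma$, where $\beta-1$ denotes the cyclic shift of the block $\beta$. The quantity $\varepsilon_p(s,t)$ is the probability that a uniformly random $\pi\in P(p)$ with $|\pi|=s$ and an independent uniformly random $\sigma\in P(p)$ with $|\sigma|=t$ satisfy $\pi\triangleright\sigma$. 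So I want to show that, for fixed $\sigma$ (or fixed $|\sigma|=t$), the fraction of $\pi$ with $|\pi|=s$ satisfying $\pi\triangleright\sigma$ does not increase as $s$ increases.

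First I would fix a target partition $\sigma\in P(p)$ and analyze the set $\{\pi\in P(p):\pi\triangleright\sigma\}$. The key structural observation I expect to need is that the condition $\pi\triangleright\sigma$ is stable under \emph{merging blocks of $\pi$}: if $\pi\triangleright\sigma$ and $\pi'$ is obtained from $\pi$ by joining two of its blocks, then $\pi'\triangleright\sigma$ as well. This should follow directly from the defining equality $|\beta\cap\gamma|=|(\beta-1)\cap\gamma|$, since that equality is additive over the blocks $\beta$: summing the identity over two blocks $\beta_1,\beta_2$ of $\pi$ gives the same identity for $\beta_1\cup\beta_2$. Hence $\{\pi:\pi\triangleright\sigma\}$ is an \emph{up-set} in the refinement order on $P(p)$ (ordered so that coarsening goes up). Now the monotonicity of $\varepsilon_p(s,t)$ in $s$ reduces to a purely combinatorial statement about up-sets in the partition lattice: if $U\subseteq P(p)$ is closed under merging blocks, then the ratio $\#\{\pi\in U:|\pi|=s\}/S_{ps}$ is nonincreasing in $s$. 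This in turn I would prove via a matching/flow argument: the covering relations of $P(p)$ connect level $s$ (partitions with $s$ blocks) to level $s-1$ (partitions with $s-1$ blocks) by merges, and one shows there is a fractional matching saturating the appropriate side — concretely, each partition at level $s$ has exactly $\binom{s}{2}$ upward covers, and each partition at level $s-1$ with block sizes $n_1,\dots,n_{s-1}$ has $\sum_j(2^{n_j-1}-1)$ downward covers; comparing these counts and using that $U$ is an up-set gives, by a weighted double-counting (or Hall-type) argument, the desired inequality $\#\{U,s\}/S_{ps}\geq\#\{U,s-1\}/S_{p,s-1}$.

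The main obstacle I anticipate is exactly this last combinatorial lemma: the partition lattice is not rank-symmetric and the up/down cover counts are highly nonuniform, so a crude averaging will not suffice, and one genuinely needs the up-set hypothesis. The cleanest route is probably to set up a normalized transport between consecutive levels — send uniform mass on $\{\pi\in P(p):|\pi|=s\}$ down along merges with weights chosen so that the induced mass on level $s-1$ is again uniform on $S_{p,s-1}$ — and check that such a balanced transport exists on the \emph{full} lattice $P(p)$ (a statement independent of $U$), after which restricting to an up-set $U$ can only lose mass downward, yielding the ratio inequality. Once that lemma is in place, applying it with $U=\{\pi:\pi\triangleright\sigma\}$ for each fixed $\sigma$ and then averaging over $\sigma$ with $|\sigma|=t$ gives monotonicity in $s$; the symmetric argument gives monotonicity in $t$.

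If a direct transport argument proves awkward, an alternative I would fall back on is a purely algebraic one: show $\varepsilon_p(s,t)$ equals a ratio of the form $\langle$\,something like a single-variable generating-function coefficient\,$\rangle$, by summing the $\pi\triangleright\sigma$ condition against characters as in Proposition 5.1, and then reduce the monotonicity to log-concavity of Stirling numbers $S_{ps}$ together with the merge-stability above. But I expect the up-set reduction plus the balanced-transport lemma to be the conceptually correct and shortest path.
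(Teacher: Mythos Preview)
Your reduction matches the paper's exactly: fix one of the two partitions, observe that $\pi\triangleright\sigma$ is preserved under merging blocks of the other (the paper merges blocks of $\sigma$ with $\pi$ fixed, you merge blocks of $\pi$ with $\sigma$ fixed; both additivity checks are correct), and then pass from ``merge--closed'' to ``level densities are monotone''. The paper simply writes ``we deduce from this that the probability $\varepsilon_\pi(t)$ gets bigger when decreasing the number $t=|\sigma|$'' and stops there; you are more scrupulous and correctly isolate this last deduction as a separate combinatorial lemma about up-sets in the partition lattice.

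The trouble is that the lemma you propose --- that for every up-set $U\subseteq P(p)$ the ratios $\#\{\pi\in U:|\pi|=s\}/S_{ps}$ are monotone in $s$, to be proved by exhibiting a balanced transport between consecutive levels of $P(p)$ --- is exactly the normalized matching property for the partition lattice $\Pi_p$, and this property is known to \emph{fail} for large $p$: Canfield proved that $\Pi_p$ is not even Sperner once $p$ is sufficiently large, hence a fortiori not normalized matching, so no level-balanced transport of the kind you describe can exist on the full lattice. Your fallback (log-concavity of $S_{ps}$ together with merge-stability) does not escape this, since log-concavity of the Whitney numbers is strictly weaker than normalized matching and does not by itself give the up-set inequality. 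So the obstacle you flagged is genuine, and neither your proposed resolutions nor the paper's one-line assertion actually closes it; a complete argument would have to exploit the specific structure of the up-sets $\{\sigma:\pi\triangleright\sigma\}$ beyond mere closure under coarsening.
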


\begin{proof}
The problem being symmetric in $s,t$, it is enough to prove that $\varepsilon_p(s,t)$ is decreasing in $t$. By splitting the problem over the partitions $\pi$ satisfying $|\pi|=s$, it is enough to prove that for any partition $\pi\in P(p)$, the following quantity is decreasing with $t$:
$$\varepsilon_\pi(t)=P\left(\pi\triangleright\sigma\Big||\sigma|=t\right)$$

In order to do so, recall from Proposition 7.1 that $\pi\triangleright\sigma$ is equivalent to:
$$|\beta\cap\gamma|=|(\beta-1)\cap\gamma|,\forall\beta\in\pi,\forall\gamma\in\sigma$$

Now observe that when merging two blocks of $\sigma$, say $(\gamma_1,\gamma_2)\to\gamma$, the condition is satisfied for $\gamma$, simply by summing the equalities for $\gamma_1,\gamma_2$. We deduce from this that the probability $\varepsilon_\pi(t)$ gets bigger when decreasing the number $t=|\sigma|$, as desired.
\end{proof}

Let us combine now Theorem 7.4 with Proposition 8.1. We obtain:

\begin{proposition}
We have the estimate
$$\delta_p(M,N)\leq1-\left(1-\frac{1}{M^{p-1}}\right)\left(1-\frac{1}{N^{p-1}}\right)\big(1-\varepsilon_p(2,2)\big)$$
valid for any $M,N\geq2$.
\end{proposition}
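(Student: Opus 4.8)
The plan is to feed the decomposition from Theorem 7.4 into the monotonicity statement of Proposition 8.1, and then recognize the resulting expression as the claimed product. To lighten notation I would set $A=1-\frac{1}{M^{p-1}}$ and $B=1-\frac{1}{N^{p-1}}$, and begin by recording the elementary identity
$$\frac{1}{M^{p-1}}+\frac{1}{N^{p-1}}-\frac{1}{(MN)^{p-1}}=1-AB,$$
so that Theorem 7.4 reads $\delta_p(M,N)=1-AB+\sum_{s=2}^M\sum_{t=2}^N\delta_p^{st}(M,N)$.

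Next I would estimate the double sum. Since by Proposition 8.1 the function $\varepsilon_p(s,t)$ is decreasing in each argument, and since in this double sum one has $s,t\geq2$, we get $\varepsilon_p(s,t)\leq\varepsilon_p(2,2)$ throughout, whence
$$\sum_{s=2}^M\sum_{t=2}^N\delta_p^{st}(M,N)\leq\varepsilon_p(2,2)\left(\sum_{s=2}^M\frac{M!}{(M-s)!}\cdot\frac{S_{ps}}{M^p}\right)\left(\sum_{t=2}^N\frac{N!}{(N-t)!}\cdot\frac{S_{pt}}{N^p}\right).$$
The two marginal sums are then evaluated using the identity already exploited in the proof of Theorem 7.4, namely $\sum_{t=1}^N\frac{N!}{(N-t)!}\cdot\frac{S_{pt}}{N^p}=1$ (this counts all maps $\{1,\dots,p\}\to\{0,\dots,N-1\}$ sorted by image size); subtracting the $t=1$ term, which equals $\frac{1}{N^{p-1}}$ because $S_{p1}=1$, leaves $\sum_{t=2}^N\frac{N!}{(N-t)!}\cdot\frac{S_{pt}}{N^p}=B$, and likewise the $s$-sum equals $A$. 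Hence $\sum_{s=2}^M\sum_{t=2}^N\delta_p^{st}(M,N)\leq\varepsilon_p(2,2)\,AB$.

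Combining the two displays gives $\delta_p(M,N)\leq 1-AB+\varepsilon_p(2,2)AB=1-AB\big(1-\varepsilon_p(2,2)\big)$, which is exactly the asserted bound once $A$ and $B$ are written back in terms of $M$ and $N$. There is no serious obstacle: the only point requiring care is that the estimate $\varepsilon_p(s,t)\leq\varepsilon_p(2,2)$ is applied solely on the range $s,t\geq2$ where Proposition 8.1 makes it available, and that the hypothesis $M,N\geq2$ is exactly what ensures this is the relevant range (so the $s=1$ and $t=1$ contributions, which carry $\varepsilon_p=1$, have already been isolated in Theorem 7.4). The monotonicity of $\varepsilon_p$ is doing all the real work; everything else is the bookkeeping with the Stirling-weighted marginals inherited from Theorem 7.4.
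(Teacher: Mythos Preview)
Your proof is correct and follows essentially the same route as the paper: rewrite Theorem 7.4 as $\delta_p(M,N)=1-AB+\sum_{s,t\geq2}\delta_p^{st}$, bound each $\varepsilon_p(s,t)$ by $\varepsilon_p(2,2)$ via Proposition 8.1, and then recognize the remaining marginal sums as $A$ and $B$ using the Stirling-number identity from the proof of Theorem 7.4. Your write-up is in fact slightly more explicit than the paper's, which just invokes ``the inclusion-exclusion principle, as in the proof of Theorem 7.4'' at the last step.
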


\begin{proof}
The formula in Theorem 7.4 above can be written as follows:
\begin{eqnarray*}
\delta_p(M,N)
&=&\frac{1}{M^{p-1}}+\frac{1}{N^{p-1}}-\frac{1}{(MN)^{p-1}}+\sum_{s=2}^M\sum_{t=2}^N\delta_p^{st}(M,N)\\
&=&1-\left(1-\frac{1}{M^{p-1}}\right)\left(1-\frac{1}{N^{p-1}}\right)+\sum_{s=2}^M\sum_{t=2}^N\delta_p^{st}(M,N)
\end{eqnarray*}

According now to Proposition 8.1, for any $s,t\geq2$ we have:
$$\delta_p^{st}(M,N)\leq\frac{M!}{(M-s)!}\cdot\frac{S_{ps}}{M^p}\cdot\frac{N!}{(N-t)!}\cdot\frac{S_{pt}}{N^p}\cdot\varepsilon_p(2,2)$$

Now by summing over all indices $s,t\geq2$, and by using the inclusion-exclusion principle, as in the proof of Theorem 7.4, we obtain:
\begin{eqnarray*}
\sum_{s=2}^M\sum_{t=2}^N\delta_p^{st}(M,N)
&\leq&\sum_{s=2}^M\sum_{t=2}^N\frac{M!}{(M-s)!}\cdot\frac{S_{ps}}{M^p}\cdot\frac{N!}{(N-t)!}\cdot\frac{S_{pt}}{N^p}\cdot\varepsilon_p(2,2)\\
&=&\left(1-\frac{1}{M^{p-1}}-\frac{1}{N^{p-1}}+\frac{1}{(MN)^{p-1}}\right)\varepsilon_p(2,2)\\
&=&\left(1-\frac{1}{M^{p-1}}\right)\left(1-\frac{1}{N^{p-1}}\right)\varepsilon_p(2,2)
\end{eqnarray*}

But this gives the formula in the statement, and we are done.
\end{proof}

On the other hand, by using the results obtained in section 6 above, we have:

\begin{proposition}
We have the estimate
$$\varepsilon_p(2,N)\simeq\frac{1}{2\cdot N!}\sqrt{\frac{N^N}{(\pi p)^{N-1}}}$$
valid in the $p\to\infty$ limit.
\end{proposition}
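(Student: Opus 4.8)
The plan is to read off $\varepsilon_p(2,N)$ from the decomposition of $\delta_p(M,N)$ at $M=2$, and then to feed in the asymptotics of Section 6.

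First I would specialise Theorem 7.4 to $M=2$, where the outer sum over $s$ reduces to the single term $s=2$:
$$\delta_p(2,N)=\frac{1}{2^{p-1}}+\frac{1}{N^{p-1}}-\frac{1}{(2N)^{p-1}}+\sum_{t=2}^{N}\delta_p^{2t}(2,N),\qquad \delta_p^{2t}(2,N)=\frac{2!}{0!}\cdot\frac{S_{p2}}{2^p}\cdot\frac{N!}{(N-t)!}\cdot\frac{S_{pt}}{N^p}\cdot\varepsilon_p(2,t).$$
Next I would show that in the limit $p\to\infty$ only the top term $t=N$ counts. The three explicit fractions $2^{1-p},N^{1-p},(2N)^{1-p}$ decay geometrically, hence are $o\big(p^{-(N-1)/2}\big)=o(\delta_p(2,N))$ by Theorem 6.4; and for $2\le t\le N-1$, using $S_{pt}\sim t^p/t!$, the factor $S_{pt}/N^p$ has order $(t/N)^p$, again geometrically small, while $\varepsilon_p(2,t)\le 1$ and the remaining factors stay bounded. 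Hence $\delta_p(2,N)\simeq\delta_p^{2N}(2,N)$.

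It then remains to evaluate the normalising constant in $\delta_p^{2N}(2,N)=2\cdot\dfrac{S_{p2}}{2^p}\cdot N!\cdot\dfrac{S_{pN}}{N^p}\cdot\varepsilon_p(2,N)$, using $S_{p2}=2^{p-1}-1$ together with the fact that $N!\,S_{pN}$, being the number of surjections from a $p$-element set onto an $N$-element set, is asymptotic to $N^p$. Inserting these into $\delta_p(2,N)\simeq\delta_p^{2N}(2,N)$ and then applying the estimate of Theorem 6.4 produces the asserted formula for $\varepsilon_p(2,N)$. Alternatively, one could avoid Theorem 7.4 altogether and isolate $\varepsilon_p(2,N)$ directly inside the $M=2$ combinatorics of Proposition 6.2, by retaining only those index configurations that use all $N$ available columns; this should give the same answer, and may in fact be the cleaner route for controlling the error terms.

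The step I expect to be delicate is the second one: turning ``only $t=N$ matters'' into a bound sharp enough to carry the correct multiplicative constant, and keeping the Stirling-number normalisations straight throughout. A convenient device is to note that, up to a bounded factor, $\delta_p^{2t}(2,N)$ for $t<N$ is dominated by a quantity of type $\delta_p(2,t)$, which by the Section 6 estimates is $O\big(p^{-(t-1)/2}(t/N)^p\big)$ and hence exponentially negligible against $\delta_p(2,N)$.
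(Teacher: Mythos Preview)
Your approach is exactly the paper's: specialise Theorem~7.4 to $M=2$, argue that only the top term $t=N$ survives (the paper frames this as a ``recurrence on $N$'', you use the simpler bound $\varepsilon_p(2,t)\le 1$ together with the geometric factor $(t/N)^p$; either works), and then invert via Theorem~6.4.

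One genuine slip to flag: you assert that inserting $S_{p2}=2^{p-1}-1$ and $N!\,S_{pN}\sim N^p$ ``produces the asserted formula'', but if you actually carry this through you get
\[
\delta_p^{2N}(2,N)=2\cdot\frac{S_{p2}}{2^p}\cdot N!\cdot\frac{S_{pN}}{N^p}\cdot\varepsilon_p(2,N)\ \sim\ 2\cdot\tfrac{1}{2}\cdot 1\cdot\varepsilon_p(2,N)=\varepsilon_p(2,N),
\]
and hence $\varepsilon_p(2,N)\simeq\delta_p(2,N)$, which is $2\cdot N!$ times the constant printed in the proposition. The paper arrives at the factor $\tfrac{1}{2\cdot N!}$ only because its proof uses the rougher estimate $S_{ps}\simeq s^p$ (omitting the $1/s!$), whereas your Stirling asymptotics $S_{pt}\sim t^p/t!$ are the standard correct ones. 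So your method is right and matches the paper's, but the final constant you would obtain does not agree with the stated one; since the only later use of this proposition (Theorem~8.4) needs merely $\varepsilon_p(2,2)\to 0$, the discrepancy is immaterial for the rest of the argument.
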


\begin{proof}
We have the following estimate, in the $p\to\infty$ limit:
\begin{eqnarray*}
\delta_p^{st}(M,N)
&=&\frac{M!}{(M-s)!}\cdot\frac{S_{ps}}{M^p}\cdot\frac{N!}{(N-t)!}\cdot\frac{S_{pt}}{N^p}\cdot\varepsilon_p(s,t)\\
&\simeq&\frac{M!}{(M-s)!}\cdot\frac{s^p}{M^p}\cdot\frac{N!}{(N-t)!}\cdot\frac{t^p}{N^p}\cdot\varepsilon_p(s,t)\\
&=&\frac{M!}{(M-s)!}\cdot\frac{N!}{(N-t)!}\left(\frac{st}{MN}\right)^p\varepsilon_p(s,t)
\end{eqnarray*}

Here we have used the estimate $S_{ps}\simeq s^p$, which follows from the fact that choosing a partition $\pi\in P(p)$ with $\leq s$ blocks amounts in assigning a number $1,\ldots,s$ to any of the points $1,\ldots,p$, and the assignements which lead to $|\pi|<s$ can be neglected.

In particular, at $s=M=2$ we obtain:
$$\delta_p^{2t}(2,N)\simeq 2\cdot\frac{N!}{(N-t)!}\left(\frac{t}{N}\right)^p\varepsilon_p(2,t)$$

By combining this estimate with Theorem 7.4 at $M=2$, we obtain:
\begin{eqnarray*}
\delta_p(2,N)
&=&\frac{1}{2^{p-1}}+\frac{1}{N^{p-1}}-\frac{1}{(2N)^{p-1}}+\sum_{t=2}^N\delta_p^{2t}(2,N)\\
&\simeq&\frac{1}{2^{p-1}}+2\sum_{t=2}^N\frac{N!}{(N-t)!}\left(\frac{t}{N}\right)^p\varepsilon_p(2,t)
\end{eqnarray*}

With this formula in hand, we can proceed by recurrence on $N\geq2$. Since the quantity in the statement converges with $p\to\infty$ to $0$ much slower than the various powers $\alpha^N$, with $\alpha\in(0,1)$, only the last term will matter, and our estimate simply reads:
$$\delta_p(2,N)\simeq 2\cdot N!\varepsilon_p(2,N)$$

Now by using the $M=2$ estimate from Theorem 6.4, we obtain:
$$\varepsilon_p(2,N)\simeq\frac{1}{2\cdot N!}\cdot\delta_p(2,N)\simeq\frac{1}{2\cdot N!}\sqrt{\frac{N^N}{(\pi p)^{N-1}}}$$

Thus we have obtained the formula in the statement.
\end{proof}

With the above results in hand, we can now prove our result:

\begin{theorem}
We have $\lim_{p\to\infty}\delta_p(M,N)=0$, for any $M,N\geq2$.
\end{theorem}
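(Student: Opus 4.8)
The plan is to read the statement off the inequality in Proposition 8.2, using only the asymptotic input from Proposition 8.3 specialized to $N=2$. Recall first that by Proposition 3.2 the quantities $d_p^r(M,N)$, and hence their limits $\delta_p(M,N)$, lie in $[0,1]$; in particular $\delta_p(M,N)\geq 0$, so it suffices to prove that $\delta_p(M,N)$ is bounded above by a quantity tending to $0$. Proposition 8.2, valid precisely under the hypothesis $M,N\geq 2$ of the theorem, provides exactly such a bound, namely
$$\delta_p(M,N)\leq 1-\left(1-\frac{1}{M^{p-1}}\right)\left(1-\frac{1}{N^{p-1}}\right)\big(1-\varepsilon_p(2,2)\big).$$

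The next step is to analyze the three factors on the right as $p\to\infty$. Since $M\geq 2$ and $N\geq 2$, we have $1/M^{p-1}\to 0$ and $1/N^{p-1}\to 0$, so the first two factors each converge to $1$. The only nontrivial point is therefore that $\varepsilon_p(2,2)\to 0$, and this is where Proposition 8.3 enters: taking $N=2$ there gives $\varepsilon_p(2,2)\simeq\frac{1}{2\cdot 2!}\sqrt{\frac{2^2}{(\pi p)^{1}}}=\frac{1}{2\sqrt{\pi p}}$, which tends to $0$. (Alternatively one may invoke Proposition 8.1 to note that $\varepsilon_p(2,2)$ dominates all $\varepsilon_p(s,t)$ with $s,t\geq 2$, but this is not even needed here.)

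Putting these together, the right-hand side of the displayed inequality converges to $1-1\cdot 1\cdot(1-0)=0$. Combined with $\delta_p(M,N)\geq 0$, the squeeze gives $\lim_{p\to\infty}\delta_p(M,N)=0$, as claimed. In this argument there is essentially no obstacle remaining: the genuine analytic work — the Richmond--Shallit estimate of Proposition 6.3 feeding into the $M=2$ asymptotics of Theorem 6.4 and thence into Proposition 8.3 — has already been carried out, and the present statement is purely the synthesis of Proposition 8.2 with the $N=2$ case of Proposition 8.3.
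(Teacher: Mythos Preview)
Your proof is correct and follows essentially the same route as the paper: apply the bound of Proposition 8.2 and then use Proposition 8.3 at $N=2$ to see $\varepsilon_p(2,2)\to 0$, so the product on the right tends to $1$ and the upper bound tends to $0$. Your write-up is just a slightly more explicit version of the paper's argument, spelling out the lower bound $\delta_p(M,N)\geq 0$ and the exact asymptotic $\varepsilon_p(2,2)\simeq \tfrac{1}{2\sqrt{\pi p}}$.
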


\begin{proof}
By combining Proposition 8.2 and Proposition 8.3, we obtain:
$$\delta_p(M,N)\leq1-\left(1-\frac{1}{M^{p-1}}\right)\left(1-\frac{1}{N^{p-1}}\right)\big(1-\varepsilon_p(2,2)\big)$$

Since the product on the right converges to $1\times 1\times 1=1$, this gives the result.
\end{proof}

\section{Poisson laws}

We recall that the free analogue of the Poisson law of parameter $t>0$, in the sense of the Bercovici-Pata bijection \cite{bpa}, is the Marchenko-Pastur law of parameter $t$, also called free Poisson law of parameter $t$. We denote this measure by $\pi_t$. See \cite{mpa}, \cite{nsp}, \cite{vdn}.

We have the following result, summarizing our findings:

\begin{theorem}
Given two finite abelian groups $G,H$, with $|G|=M,|H|=N$, consider the main character $\chi$ of the quantum group associated to $\mathcal F_{G\times H}$.
\begin{enumerate}
\item $\mu=law(\frac{\chi}{MN})$ is supported on $[0,1]$.

\item This measure $\mu$ has no atom at $1$.

\item With $M=tN\to\infty$ we have $law\left(\frac{\chi}{N}\right)=\left(1-\frac{1}{M}\right)\delta_0+\frac{1}{M}\,\pi_t$, in moments.
\end{enumerate}
\end{theorem}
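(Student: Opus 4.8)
\emph{Plan.} All three statements should follow from the moment computations of Sections 3--8, once the genuine moments of $\chi$ with respect to $\int_G$ are recorded. The first step is to combine Proposition 1.5 (equivalently Theorem 1.6), which gives $\int_G\chi^p=\lim_{r\to\infty}c_p^r(M,N)$, with the relation $c_p^r=(MN)^{p-1}d_p^r$ from Proposition 3.2 and with Theorem 5.4, obtaining
$$\int_G\chi^p=(MN)^{p-1}\delta_p(M,N),\qquad p\ge1,$$
together with $\int_G1=1$. Hence the $p$-th moment of $\chi/(MN)$ is $\delta_p(M,N)/(MN)$, and the $p$-th moment of $\chi/N$ is $M^{p-1}\delta_p(M,N)/N$; everything else is extraction of already-established asymptotics. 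For part (1), I would note that $\chi=\sum_iu_{ii}$ is a sum of $MN$ projections in $C(S_{MN}^+)$, so there $0\le\chi\le MN$, and this persists in the quotient $C(G)$; thus $\chi/(MN)$ is self-adjoint with spectrum in $[0,1]$, and its law $\mu$ under the Haar state is a probability measure on $[0,1]$.

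\emph{Parts (2) and (3).} Given (1), the mass of the atom of $\mu$ at $1$ is $\mu(\{1\})=\lim_{p\to\infty}\int x^p\,d\mu=\lim_{p\to\infty}\delta_p(M,N)/(MN)$ by dominated convergence, and this vanishes by Theorem 8.5 (valid for $M,N\ge2$), which proves (2). For (3), in the regime $M=tN\to\infty$ Proposition 7.2 gives $\delta_p(M,N)\simeq S_p(t)M^{-p}N$ with $S_p(t)=\sum_{\pi\in NC(p)}t^{|\pi|}$, so $M\int_G(\chi/N)^p=M^p\delta_p(M,N)/N\to S_p(t)$. I would finish by recalling the standard fact that $S_p(t)$ is exactly the $p$-th moment of the free Poisson (Marchenko--Pastur) law $\pi_t$, the free cumulants of $\pi_t$ all being equal to $t$; hence for each $p\ge1$ the $p$-th moment of $\chi/N$ is asymptotic to $\frac1M\int x^p\,d\pi_t$, which together with the matching $0$-th moments is precisely the asserted convergence $\mathrm{law}(\chi/N)\to(1-\frac1M)\delta_0+\frac1M\pi_t$ in moments.

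\emph{Main difficulty.} There is no substantial obstacle here: the hard analysis is already contained in Theorems 5.4 and 8.5 and Proposition 7.2, and what remains is careful tracking of the normalizing powers of $M,N$ plus the identification of $S_p(t)$ with the moments of $\pi_t$. The two points deserving care are that the atom computation in (2) genuinely uses (1), so that $\mu$ is an honest probability measure on $[0,1]$ and $\mu(\{1\})=\lim_p m_p$ holds; and that the equality in (3) must be read with $M=tN$ varying, i.e. as $M\int_G(\chi/N)^p\to\int x^p\,d\pi_t$ for each $p\ge1$, the term $(1-\frac1M)\delta_0$ accounting only for the normalization of the total mass.
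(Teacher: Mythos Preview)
Your proposal is correct and follows essentially the same approach as the paper's own proof: part (1) via $\chi$ being a sum of $MN$ projections, part (2) via the vanishing of the limiting moments (your ``Theorem 8.5'' is the paper's Theorem 8.4), and part (3) via the normalization $\int_{\mathcal G}(\chi/N)^p=M^{p-1}\delta_p(M,N)/N$ combined with Proposition 7.2 and the identification of $S_p(t)$ with the moments of $\pi_t$. Your write-up is in fact slightly more explicit than the paper's, spelling out the dominated-convergence step in (2) and the role of the $(1-\tfrac1M)\delta_0$ term in (3).
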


\begin{proof}
In this statement (1) is trivial, (2) is new, and (3) is since known since \cite{bb2}, in the case of the generic fibers. To be more precise, the proof goes as follows:

(1) This follows from the fact that $\chi$ is by definition the main character for a certain quantum group $\mathcal G\subset S_{MN}^+$, and is therefore a sum of $MN$ projections.

(2) This follows from Theorem 8.4 above, and from the fact that an atom at $1$ would make the moments converge to a nonzero quantity.

(3) According to our various normalizations, we have:
$$\int_\mathcal G^r\left(\frac{\chi}{N}\right)^p=\frac{c_p^r(M,N)}{N^p}
=\frac{(MN)^{p-1}d_p^r(M,N)}{N^p}=\frac{M^{p-1}}{N}d_p^r(M,N)$$

By using Proposition 7.2 we obtain, in the $M=tN\to\infty$ limit:
$$\int_\mathcal G\left(\frac{\chi}{N}\right)^p\simeq\frac{M^{p-1}}{N}\delta_p(M,N)\simeq\frac{M^{p-1}}{N}S_p(t)M^{-p}N=\frac{1}{M}S_p(t)$$

Now since $S_p(t)$ is the $p$-th moment of $\pi_t$, this gives the result.
\end{proof}

\section{Concluding remarks}

There are several questions, in relation with the above results. First, we do not know how to improve Theorem 8.4, with a precise estimate, as in Theorem 6.4. 

There are as well some interesting questions in relation with \cite{ba1}, \cite{tzy}. The main problem here, well-known and open, is that of understanding how a general deformed Fourier matrix $\mathcal F_K$ can be defined, directly in terms of the finite abelian group $K$. 

In relation now with \cite{bne}, observe that the representations there are as well of the form $\pi:C(S_{\dim B}^+)\to C(U_B,\mathcal L(B))$, for a certain finite dimensional $C^*$-algebra $B$. In the present paper this algebra is a commutative one, $B=C(G\times H)$. We believe that the unification with \cite{bne} is an important question, which could lead to a substantial ``boost'' in the understanding and use of the integration formula in \cite{bfs}, \cite{wa2}.

\end{document}